\documentclass[12pt]{article}

\usepackage{amssymb,amsmath,amsfonts}
\usepackage{amsthm}
\usepackage{enumitem}
\usepackage{color}
\usepackage{tikz}
\usepackage{booktabs,threeparttable}
\usepackage{color}

\numberwithin{equation}{section} 
\theoremstyle{plain}
\newtheorem{theorem}{Theorem}[section]

\newtheorem{corollary}[theorem]{Corollary}

\newtheorem{lemma}[theorem]{Lemma}

\theoremstyle{definition}
\newtheorem{definition}{Definition}[section]

\theoremstyle{remark}

\newcommand{\qbinom}[3]{\left[\genfrac{}{}{0pt}{}{#1}{#2}\right]_{#3}}

\allowdisplaybreaks

\begin{document}

\title{\bf Exploring $q$-Stancu Operators via a New Representation}
\author{Feride Baraner$^1$, Ovgu Gurel$^{1,}\thanks{Corresponding Author}$}
\date{}
\maketitle

\begin{center}
{\it $^1$Recep Tayyip Erdogan University, Department of Mathematics, 53100, Rize, Turkey}\\
{\it e-mail: feride$\_$baraner$23$@erdogan.edu.tr, ovgu.gurelyilmaz@erdogan.edu.tr}\\
\end{center}

\begin{abstract}
This paper investigates the $q$-Stancu operators, which generalize the $q$-Bernstein operators, by developing a new representation in terms of the $q$-Pochhammer symbol. Based on this representation, some known properties are re-discovered, and a general recurrence relation for the moments is established. It is shown that higher-order moments can be expressed in terms of lower-order ones. Furthermore, the limit form of the operators is defined and their uniform convergence is proved. Finally, the moments of the limit operator and their recurrence relations are presented.
 
\end{abstract}

{\bf Keywords}: $q$-Bernstein operator, $q$-Stancu operator, uniform convergence, limit $q$-Stancu operator

{\bf 2020 MSC:} 41A10, 41A36, 47B38.


\section{Introduction}

Quantum calculus has found a wide variety of applications in mathematical physics, statistics, Lie algebras, and combinatorial analysis \cite{kac}. One particularly fruitful direction has been the application of $q$-calculus to the theory of linear positive operators, which naturally led to $q$-generalizations of classical approximation operators. Among these, $q$-Bernstein polynomials -pioneered independently by Lupaş \cite{lupas} and Phillips \cite{phillips}- have been extensively studied in the literature \cite{oruc, ostrovska2003, ostrovska2006, ostrovska2014, turan, videnskii}.

Motivated by these developments, Nowak \cite{nowak} introduced the $q$-analogue of the Stancu operators in 2009, obtained a $q$-difference representation of these operators, and studied their approximation properties. Various features of these operators have been investigated from different perspectives ever since. Their approximation properties, convergence rates, and Voronovskaya-type theorems have been examined using tools such as the modulus of continuity, probabilistic approaches, and $q$-calculus techniques \cite{agratini2010,jiang,nowak_gupta2011}. Iteration properties, eigenvalues, eigenvectors, and shape-preserving properties have also been examined \cite{wang2013,wang2014}. Furthermore, alternative representations based on $q$-Beta functions have been proposed, together with corresponding Korovkin-type theorems and limit operators \cite{finta2016a}. Several generalizations, including Durrmeyer and Kantorovich variants, have been considered in subsequent studies \cite{erencin2012, gupta2016, neer2017}.

The study of limit forms of $q$-Bernstein-type operators has played an important role in the literature \cite{Ilinskii, mahmudov, ostrovska2023}. However, obtaining the limit of the $q$-Stancu operators introduced by Nowak has not been possible until now, due to the divergence of certain infinite products appearing in the basis polynomials. The new representation developed in this paper resolves this issue and allows for an explicit definition of the limit $q$-Stancu operator.

For the convenience of the reader, we now collect the basic notations and results from $q$-calculus that will be used throughout the paper. 

Let $q>0$. For any $n \in \mathbb{N}_0$, the $q$-integer is defined by
\begin{align}
[n]_q := 1+q+\cdots+q^{n-1}, \quad [0]_q := 0, \label{qint}
\end{align}
which reduces to the classical integer when $q=1$. The corresponding $q$-factorial is given by
\[
[n]_q! := [1]_q[2]_q\cdots[n]_q, \quad [0]_q! := 1.
\]

Moreover, for $c \in \mathbb{C}$, the $q$-Pochhammer symbol is defined by
\begin{align} 
(a;q)_0 := 1,\qquad
(a;q)_m := \prod_{j=0}^{m-1}(1-aq^j),\; m \in \mathbb{N},\qquad
(a;q)_\infty := \prod_{j=0}^{\infty}(1-aq^j).  \label{qpoch}
\end{align} 

Using these notions, for $k,n \in \mathbb{N}_0$ with $0 \le k \le n$, the $q$-binomial coefficient is defined as
\begin{align}
\qbinom{n}{k}{q} := \frac{[n]_q!}{[k]_q![n-k]_q!}= \frac{(q;q)_n}{(q;q)_k \, (q;q)_{n-k}}. \label{binompoch}
\end{align}
It is well-known that, for $q=1$, this coefficient coincides with the classical binomial coefficient.

The following identities from $q$-calculus \cite{askey} will be needed in the sequel,
\begin{align}
(ab; q)_n = \sum_{k=0}^{n} \begin{bmatrix} n \\ k \end{bmatrix}_q b^k (a; q)_k (b; q)_{n-k}, \quad 0 \le k \le n, \label{30}
\end{align}
and, for $k \in \mathbb{N}_0$, $|x| < 1$, $|q| < 1$,
\begin{align}
\sum_{k=0}^{\infty} \frac{(a; q)_k}{(q; q)_k} x^k = \frac{(ax; q)_{\infty}}{(x; q)_{\infty}}. \label{31}
\end{align}

The $q$-analogue of the Stancu operators was introduced by Nowak \cite{nowak} as follows.
\begin{definition}
Let $f \in C[0,1]$, $q \in (0,1)$, $\alpha>0$. The $n$-th $q$-Stancu polynomial is defined by
\begin{align}
\left(S_{n}^{q,\alpha}f\right)(x)= \sum_{k=0}^{n} f\left(\frac{[k]_q}{[n]_q}\right)p^{q,\alpha}_{n,k}(x), \quad n=1,2,\dots, \label{qstancu}
\end{align}
where $p^{q,\alpha}_{n,k}(x)$ are the $q$-Stancu basis polynomials given by
\begin{align}
p^{q,\alpha}_{n,k}(x)= \qbinom{n}{k}{q} \frac{\prod_{i=0}^{k-1}(x+\alpha[i]_q) \prod_{s=0}^{n-1-k}(1-q^s x+\alpha[s]_q)}{\prod_{i=0}^{n-1}(1+\alpha[i]_q)}, \quad k=0,1,\dots,n. \label{oldbasis}
\end{align}
\end{definition}

The paper is organized as follows. In Section 2, we present a new representation of the $q$-Stancu operators introduced by Nowak, obtained via the $q$-Pochhammer symbol. Using this representation, we provide alternative proofs of the first three moments of the operators, differing from the approach in Nowak's original work. Moreover, recurrence relations for the $m$-th order moments are established, showing that these moments can be expressed in terms of lower-order moments. In Section 3, we define the limit case of the operators and investigate its basic properties. We note that the original definition given by Nowak does not allow for the computation of the limit operator, due to the divergence of a product appearing in the structure of the operators. The new representation introduced in this paper enables an explicit definition of the limit operator.

\section{New Representation and Basic Properties}

In this section, the coefficients forming the basis polynomials of the $q$-Stancu operators introduced by Nowak in \cite{nowak} are rewritten in terms of the $q$-Pochhammer symbol given in \eqref{qpoch}, obtaining an alternative representation of the operators. The following lemma provides a key step toward this alternative representation. In the sequel, all results are obtained under the assumption $0<q<1$.\\
\begin{lemma} \label{pochrep}
Let $n\in\mathbb{N}$ and $\alpha\ge0$, and define $\gamma=\alpha/(1-q)$. 
Then, for $0\le k\le n$, the following identities hold:
\begin{enumerate}[label=(\roman*)]
\setlength{\itemindent}{3em}
	\item [$(i)$] $\displaystyle \prod_{i=0}^{k-1}\bigl(x+\alpha[i]_q\bigr)=(x+\gamma)^k\left(\frac{\gamma}{x+\gamma};q\right)_k$, 
	\item [$(ii)$] $ \displaystyle \prod_{i=0}^{n-k-1}\bigl(1-q^i x+\alpha[i]_q\bigr)=(1+\gamma)^{n-k}\left(\frac{x+\gamma}{1+\gamma};q\right)_{n-k}$.
\end{enumerate}
\end{lemma}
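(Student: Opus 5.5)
The plan is to verify both identities factor by factor. Each side of $(i)$ and $(ii)$ is a product over the same index range. So it suffices to show that the $i$-th factor on the left equals the $i$-th factor on the right, with the prefactor $(x+\gamma)^k$ or $(1+\gamma)^{n-k}$ distributed evenly, one copy of $(x+\gamma)$ or $(1+\gamma)$ per factor. The only input is the closed form $[i]_q=(1-q^i)/(1-q)$, valid for $0<q<1$ and all $i\in\mathbb{N}_0$ by summing the geometric series in \eqref{qint}. This gives the key relation
\[
\alpha[i]_q=\gamma(1-q^i).
\]

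For $(i)$, I would rewrite the $i$-th factor as
\[
x+\alpha[i]_q=x+\gamma-\gamma q^i=(x+\gamma)\Bigl(1-\frac{\gamma}{x+\gamma}q^i\Bigr).
\]
Taking the product over $i=0,\dots,k-1$ and comparing with the definition \eqref{qpoch} of $(a;q)_k$, with $a=\gamma/(x+\gamma)$, gives $(i)$ at once.

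For $(ii)$, the same substitution gives
\[
1-q^ix+\alpha[i]_q=1+\gamma-q^i(x+\gamma)=(1+\gamma)\Bigl(1-\frac{x+\gamma}{1+\gamma}q^i\Bigr).
\]
Multiplying over $i=0,\dots,n-k-1$ yields $(1+\gamma)^{n-k}\bigl(\tfrac{x+\gamma}{1+\gamma};q\bigr)_{n-k}$.

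No step is a real obstacle. The only points needing care are the degenerate cases:
\begin{itemize}
\item When $k=0$ in $(i)$ or $k=n$ in $(ii)$, both sides are empty products equal to $1$, matching the conventions $(a;q)_0=1$ and $c^0=1$.
\item In $(i)$, the argument $\gamma/(x+\gamma)$ requires $x+\gamma\neq 0$. This holds for $x\in(0,1]$ when $\alpha\ge0$. If $x=\gamma=0$, I would read the right side as the polynomial $\prod_{i<k}(x+\gamma-\gamma q^i)$, so the identity holds as an identity of polynomials in $x$. Alternatively, it extends to $x=0$ by continuity.
\item In $(ii)$, the argument needs $1+\gamma>0$, which is automatic since $\gamma\ge0$.
\end{itemize}
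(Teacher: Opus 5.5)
Your proposal is correct and matches the paper's proof: substitute $\alpha[i]_q=\gamma(1-q^i)$, factor $(x+\gamma)$ (resp.\ $(1+\gamma)$) out of each factor, and recognize the $q$-Pochhammer symbol. Your handling of the empty-product cases and of the degenerate point $x=\gamma=0$ is extra care that the paper does not give.
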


\begin{proof}
\textit{(i)} From definition \eqref{qint}, we have
\[
\prod_{i=0}^{k-1}\bigl(x+\alpha[i]_q\bigr)
=\prod_{i=0}^{k-1}\left(x+\alpha\frac{1-q^i}{1-q}\right)
=\prod_{i=0}^{k-1}\bigl(x+\gamma(1-q^i)\bigr).
\]
Factoring out $(x+\gamma)$ from each term, we obtain
\[
\prod_{i=0}^{k-1}\bigl(x+\gamma-\gamma q^i\bigr)
=(x+\gamma)^k
\prod_{i=0}^{k-1}\left(1-\frac{\gamma}{x+\gamma}q^i\right).
\]
Using the $q$-Pochhammer symbol defined in \eqref{qpoch}, this yields
\[
\prod_{i=0}^{k-1}\bigl(x+\alpha[i]_q\bigr)
=(x+\gamma)^k\left(\frac{\gamma}{x+\gamma};q\right)_k.
\]

\textit{(ii)} Using \eqref{qint} together with \eqref{qpoch}, following the same steps as in part \textit{(i)}, one has
\begin{align*}
\prod_{i=0}^{n-k-1}\bigl(1-q^i x+\alpha[i]_q\bigr)
&=
\prod_{i=0}^{n-k-1}\bigl(1+\gamma-(x+\gamma)q^i\bigr) \\
&=(1+\gamma)^{\,n-k}
\prod_{i=0}^{n-k-1}
\left(1-\frac{x+\gamma}{1+\gamma}q^i\right) \\
&=
(1+\gamma)^{\,n-k}
\left(\frac{x+\gamma}{1+\gamma};q\right)_{n-k}.
\end{align*}
\end{proof}
Based on the above results, we now establish an alternative representation of the $q$-Stancu operators. Taking into account identities in Lemma \ref{pochrep}, the basis polynomials \eqref{oldbasis} can be rewritten in the following form.
\begin{align}
p_{n,k}^{q,\alpha}(x)
&=
\qbinom{n}{k}{q}
\frac{
\left(\dfrac{\gamma}{x+\gamma};q\right)_k
\left(\dfrac{x+\gamma}{1+\gamma};q\right)_{n-k}
}{
\left(\dfrac{\gamma}{1+\gamma};q\right)_n
}
\left(\dfrac{x+\gamma}{1+\gamma}\right)^k,
\qquad k=0,1,\ldots,n.
\label{newbasis}
\end{align}
This new form of the basis polynomials will serve as the main tool in the rest of the paper. In particular, it allows us to compute the moments of the $q$-Stancu operators via a different proof technique. The first three moments were obtained by Nowak in 2009 using the $q$-difference representation given in \cite{nowak}. Here, we present an alternative derivation based on this new form.

\begin{theorem} \label{moment}
Let $\alpha \ge 0$. Then, for every $n \in \mathbb{N}$ and $x \in [0,1]$, we have
\begin{align*}
S_n^{q,\alpha}(e_0;x) &= 1,\\
S_n^{q,\alpha}(e_1;x) &= x,\\
S_n^{q,\alpha}(e_2;x) &=
\frac{1}{1+\alpha}
\left\{x(x+\alpha)+\frac{x(1-x)}{[n]_q}\right\}.
\end{align*}
\end{theorem}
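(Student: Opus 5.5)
Set $a:=\gamma/(x+\gamma)$, $b:=(x+\gamma)/(1+\gamma)$, so that $ab=\gamma/(1+\gamma)$. In \eqref{newbasis} the basis polynomial then reads
\[
p_{n,k}^{q,\alpha}(x)=\qbinom{n}{k}{q}\frac{b^k(a;q)_k(b;q)_{n-k}}{(ab;q)_n}.
\]
The zeroth moment is exactly identity \eqref{30} divided by $(ab;q)_n$, so $S_n^{q,\alpha}(e_0;x)=1$ is immediate. For the higher moments, the plan is to absorb the factors $[k]_q$ coming from $f(e_1)$, $f(e_2)$ into the $q$-binomial coefficient and Pochhammer symbols, shift the index, and apply \eqref{30} again with $n$ replaced by $n-1$ or $n-2$ and with shifted parameters. (If $\alpha=0$ then $\gamma=0$ and $a$ is undefined; in that case read the formula as the $q$-Bernstein case, or pass to the limit $\gamma\to0$. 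Likewise $x=0$ can be handled by continuity.)

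\emph{First moment.} Use $[k]_q\qbinom{n}{k}{q}=[n]_q\qbinom{n-1}{k-1}{q}$ together with $(a;q)_k=(1-a)(aq;q)_{k-1}$ and $(ab;q)_n=(1-ab)(abq;q)_{n-1}$. After the substitution $k\to k+1$ the sum becomes
\[
\frac{b(1-a)}{1-ab}\cdot\frac{1}{(abq;q)_{n-1}}\sum_{k=0}^{n-1}\qbinom{n-1}{k}{q}b^k(aq;q)_k(b;q)_{n-1-k}.
\]
This is \eqref{30} with $a\mapsto aq$, so the sum equals $(abq;q)_{n-1}$ and the whole expression is $b(1-a)/(1-ab)$. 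Now $1-a=x/(x+\gamma)$ and $1-ab=1/(1+\gamma)$, which gives $b(1-a)(1+\gamma)=x$.

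\emph{Second moment.} Write $[k]_q^2=[k]_q\bigl(1+q[k-1]_q\bigr)$. This gives
\[
S_n(e_2;x)=\frac{1}{[n]_q}S_n(e_1;x)+\frac{q}{[n]_q^2}\sum_k [k]_q[k-1]_q\,p_{n,k}^{q,\alpha}(x).
\]
For the second sum use $[k]_q[k-1]_q\qbinom{n}{k}{q}=[n]_q[n-1]_q\qbinom{n-2}{k-2}{q}$, peel off two factors from each Pochhammer symbol, and apply \eqref{30} with $a\mapsto aq^2$ and $n\mapsto n-2$. The sum then equals
\[
[n]_q[n-1]_q\,\frac{b^2(1-a)(1-aq)}{(1-ab)(1-abq)}.
\]

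\emph{Main obstacle.} The remaining work is to simplify this algebraically to the stated form. In terms of $x$ and $\gamma$ one has $(1-aq)/(1-abq)=(x+\gamma-\gamma q)(1+\gamma)/\bigl((x+\gamma)(1+\gamma-\gamma q)\bigr)$, and $\gamma(1-q)=\alpha$, so $1+\gamma-\gamma q=1+\alpha$ and $x+\gamma-\gamma q=x+\alpha$. The product therefore collapses to $x(x+\alpha)/(1+\alpha)$. With $q[n-1]_q=[n]_q-1$ the second moment becomes
\[
\frac{x}{[n]_q}+\Bigl(1-\frac{1}{[n]_q}\Bigr)\frac{x(x+\alpha)}{1+\alpha}.
\]
Finally I would check that this equals $\frac{1}{1+\alpha}\bigl\{x(x+\alpha)+x(1-x)/[n]_q\bigr\}$. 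The coefficient of $1/[n]_q$ is
\[
x-\frac{x(x+\alpha)}{1+\alpha}=\frac{x(1-x)}{1+\alpha},
\]
which confirms the identity.
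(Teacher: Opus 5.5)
Your proof is correct and follows essentially the same route as the paper. Both set $a=\gamma/(x+\gamma)$, $b=(x+\gamma)/(1+\gamma)$, absorb the $[k]_q$ factors into the $q$-binomial coefficient, peel off the leading Pochhammer factors, and apply \eqref{30} with $a\mapsto aq$ and $a\mapsto aq^2$. The only difference is cosmetic: you split $[k]_q^2=[k]_q(1+q[k-1]_q)$ before shifting the index, so the first piece is $S_n^{q,\alpha}(e_1;x)/[n]_q$, whereas the paper shifts first and uses $[k+1]_q=1+q[k]_q$ to obtain $I_1+I_2$; you also carry out the final simplification that the paper leaves implicit.
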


\begin{proof}
From the definition of the $q$-Stancu operators with the basis polynomials represented in \eqref{newbasis}, one can write
\begin{align*}
S_n^{q,\alpha}(e_0;x)
&=
\frac{1}{\left(\dfrac{\gamma}{1+\gamma};q\right)_n}
\sum_{k=0}^{n}
\qbinom{n}{k}{q}
\left(\frac{x+\gamma}{1+\gamma}\right)^k
\left(\frac{\gamma}{x+\gamma};q\right)_k
\left(\frac{x+\gamma}{1+\gamma};q\right)_{n-k}.
\end{align*}
Taking $a=\gamma/(x+\gamma)$ and $b=(x+\gamma)/(1+\gamma)$ in identity \eqref{30}, we obtain
\[
S_n^{q,\alpha}(e_0;x)
=
\frac{1}{\left(\dfrac{\gamma}{1+\gamma};q\right)_n}
\left(\frac{\gamma}{1+\gamma};q\right)_n
=1.
\]

Now, let us take $f(t) = e_1(t)$. Then, one has
\begin{align*}
S_n^{q,\alpha}(e_1; x) &= \frac{1}{\left(\dfrac{\gamma}{1+\gamma};q\right)_n}
\sum_{k=0}^{n} 
\frac{[k]_q}{[n]_q} \begin{bmatrix} n \\ k \end{bmatrix}_q
\left(\frac{x+\gamma}{1+\gamma}\right)^k 
\left(\frac{\gamma}{x+\gamma};q\right)_k
\left(\frac{x+\gamma}{1+\gamma}; q\right)_{n-k} \\
&= \frac{1}{\left(\dfrac{\gamma}{1+\gamma};q\right)_n} 
\sum_{k=0}^{n-1} 
\begin{bmatrix} n-1 \\ k \end{bmatrix}_q
\left(\frac{x+\gamma}{1+\gamma}\right)^{k+1} 
\left(\frac{\gamma}{x+\gamma};q\right)_{k+1}  
\left(\frac{x+\gamma}{1+\gamma}; q\right)_{n-k-1}.
\end{align*}
Using the property of the $q$-Pochhammer symbol, we can write
\begin{align*}
S_n^{q,\alpha}(e_1; x) &= 
\frac{x+\gamma}{1+\gamma} \left( 1 - \frac{\gamma}{\gamma x + \gamma} \right)
\frac{1}{\left(\dfrac{\gamma}{1+\gamma};q\right)_n} \\
&\quad \times \sum_{k=0}^{n-1} 
\begin{bmatrix} n-1 \\ k \end{bmatrix}_q
\left(\frac{x+\gamma}{1+\gamma}\right)^k
\left(\frac{q\gamma}{x+\gamma}; q\right)_k
\left(\frac{x+\gamma}{1+\gamma}; q\right)_{n-k-1}.
\end{align*}
Taking into account expression \eqref{30}, we finally obtain
\[
S_n^{q,\alpha}(e_1; x) = \frac{x}{1+\gamma} \dfrac{\left(\dfrac{q\gamma}{x+\gamma}; q\right)_{n-1}}{\left(\dfrac{q\gamma}{x+\gamma}; q\right)_n} = x.
\]

For $f(t)=e_2(t)$, one has
\begin{align*}
S_{n}^{q,\alpha}(e_2; x)
&= \frac{1}{\left(\dfrac{\gamma}{1+\gamma}; q\right)_n}
\sum_{k=0}^{n}
\left(\frac{[k]_q}{[n]_q}\right)^2
\begin{bmatrix} n \\ k \end{bmatrix}_q
\left(\frac{x+\gamma}{1+\gamma}\right)^k
\left(\frac{\gamma}{x+\gamma}; q\right)_k
\left(\frac{x+\gamma}{1+\gamma}; q\right)_{n-k} \\
&= \frac{1}{\left(\dfrac{\gamma}{1+\gamma};\, q\right)_{\!n}}
   \sum_{k=0}^{n-1}
   \frac{[k+1]_q}{[n]_q}
   \begin{bmatrix} n-1 \\ k \end{bmatrix}_{\!q}
   \left(\frac{x+\gamma}{1+\gamma}\right)^{\!k+1}
   \left(\frac{\gamma}{x+\gamma};\, q\right)_{\!k+1}
   \left(\frac{x+\gamma}{1+\gamma};\, q\right)_{\!n-k-1} \\
&= \frac{1}{\left(\dfrac{\gamma}{1+\gamma};\, q\right)_{\!n}}
   \sum_{k=0}^{n-1}
   \left(\frac{1 + q[k]_q}{[n]_q}\right)
   \begin{bmatrix} n-1 \\ k \end{bmatrix}_{\!q}
   \left(\frac{x+\gamma}{1+\gamma}\right)^{\!k+1}
   \left(\frac{\gamma}{x+\gamma};\, q\right)_{\!k+1}
   \left(\frac{x+\gamma}{1+\gamma};\, q\right)_{\!n-k-1}
  \nonumber \\[10pt]
&= \frac{1}{[n]_q \left(\dfrac{\gamma}{1+\gamma};\, q\right)_{\!n}}
   \sum_{k=0}^{n-1}
   \begin{bmatrix} n-1 \\ k \end{bmatrix}_{\!q}
   \left(\frac{x+\gamma}{1+\gamma}\right)^{\!k+1}
   \left(\frac{\gamma}{x+\gamma};\, q\right)_{\!k+1}
   \left(\frac{x+\gamma}{1+\gamma};\, q\right)_{\!n-k-1}
   \nonumber \\[6pt]
&\quad + \frac{q}{[n]_q \left(\dfrac{\gamma}{1+\gamma};\, q\right)_{\!n}}
   \sum_{k=0}^{n-1}
   [k]_q
   \begin{bmatrix} n-1 \\ k \end{bmatrix}_{\!q}
   \left(\frac{x+\gamma}{1+\gamma}\right)^{\!k+1}
   \left(\frac{\gamma}{x+\gamma};\, q\right)_{\!k+1}
   \left(\frac{x+\gamma}{1+\gamma};\, q\right)_{\!n-k-1}
   \nonumber \\[10pt]
&=: I_1 + I_2. \nonumber
\end{align*}
Taking into account equality \eqref{30} for the sum 
$I_1$ and making the necessary arrangements,
\begin{align*}
I_1 &= \frac{\left(\dfrac{x+\gamma}{1+\gamma}\right)\!\left(1 - \dfrac{\gamma}{x+\gamma}\right)}
           {[n]_q \left(\dfrac{\gamma}{1+\gamma};\, q\right)_{\!n}}
      \sum_{k=0}^{n-1}
      \begin{bmatrix} n-1 \\ k \end{bmatrix}_{\!q}
      \left(\frac{x+\gamma}{1+\gamma}\right)^{\!k}
      \left(\frac{q\gamma}{x+\gamma};\, q\right)_{\!k}
      \left(\frac{x+\gamma}{1+\gamma};\, q\right)_{\!n-k-1} \nonumber \\
&= \frac{\left(\dfrac{x}{1+\gamma}\right)}{[n]_q \left(\dfrac{\gamma}{1+\gamma};\, q\right)_{\!n}}
  \left(\frac{q\gamma}{1+\gamma};\, q\right)_{\!n-1}
= \frac{x}{[n]_q(1+\gamma)\!\left(1 - \dfrac{\gamma}{1+\gamma}\right)}
= \frac{x}{[n]_q}
\end{align*}
is obtained. For the term $I_2$, one has
\begin{align*}
I_2 &= \frac{q}{[n]_q \left(\dfrac{\gamma}{1+\gamma};\, q\right)_{\!n}}
      \sum_{k=0}^{n-1}
      [k]_q
      \begin{bmatrix} n-1 \\ k \end{bmatrix}_{\!q}
      \left(\frac{x+\gamma}{1+\gamma}\right)^{\!k+1}
      \left(\frac{\gamma}{x+\gamma};\, q\right)_{\!k+1}
      \left(\frac{x+\gamma}{1+\gamma};\, q\right)_{\!n-k-1} \\
 &= \frac{q[n-1]_q}{[n]_q \left(\dfrac{\gamma}{1+\gamma};\, q\right)_{\!n}}
      \sum_{k=1}^{n-1}
      \begin{bmatrix} n-2 \\ k-1 \end{bmatrix}_{\!q}
      \left(\frac{x+\gamma}{1+\gamma}\right)^{\!k+1}
      \left(\frac{\gamma}{x+\gamma};\, q\right)_{\!k+1}
      \left(\frac{x+\gamma}{1+\gamma};\, q\right)_{\!n-k-1} \\
&= \frac{q[n-1]_q}{[n]_q \left(\dfrac{\gamma}{1+\gamma};\, q\right)_{\!n}}
      \sum_{k=0}^{n-2}
      \begin{bmatrix} n-2 \\ k \end{bmatrix}_{\!q}
      \left(\frac{x+\gamma}{1+\gamma}\right)^{\!k+2}
      \left(\frac{\gamma}{x+\gamma};\, q\right)_{\!k+2}
      \left(\frac{x+\gamma}{1+\gamma};\, q\right)_{\!n-k-2}.
\end{align*}
Applying \eqref{30} to the last expression, we arrive at
\begin{align*}
I_2 &= \frac{q[n-1]_q}{[n]_q \left(\dfrac{\gamma}{1+\gamma};\, q\right)_{\!n}}
       \left(\frac{x+\gamma}{1+\gamma}\right)^{\!2}
       \left(1 - \frac{\gamma}{x+\gamma}\right)
       \left(1 - q\frac{\gamma}{x+\gamma}\right)
       \notag \\[6pt]
&\quad \times
       \sum_{k=0}^{n-2}
       \begin{bmatrix} n-2 \\ k \end{bmatrix}_{\!q}
       \left(\frac{x+\gamma}{1+\gamma}\right)^{\!k}
       \left(\frac{q^2\gamma}{x+\gamma};\, q\right)_{\!k}
       \left(\frac{x+\gamma}{1+\gamma};\, q\right)_{\!n-k-2}
       \notag \\[10pt]
&= \frac{q[n-1]_q}{[n]_q \left(\dfrac{\gamma}{1+\gamma};\, q\right)_{\!n}}
   \left(\frac{x+\gamma}{1+\gamma}\right)^{\!2}
   \left(\frac{x}{x+\gamma}\right)
   \left(\frac{x+(1-q)\gamma}{x+\gamma}\right)
   \left(\frac{q^2\gamma}{1+\gamma};\, q\right)_{\!n-2} \\
&= \frac{q[n-1]_q}{[n]_q} \cdot \frac{x\bigl(x + (1-q)\gamma\bigr)}{1 + (1-q)\gamma}.
\end{align*}
Combining the results obtained for $I_1$ and $I_2$, the proof is complete.
\end{proof}
Up to this point in our work, the first three moments of the $q$-Stancu operators have been considered. We now turn our attention to the more general case of the $m$-th order moments of these operators, where the recurrence relations will be examined. These relations allow any $m$-th order moment (for $m = 0, 1, 2, \dots$) to be expressed in terms of the moments of lower order.

\begin{theorem}
Let $m \in \mathbb{N}_0$ and $\alpha \ge 0$. Then, for every $n \in \mathbb{N}$ and $x \in [0,1]$, the $q$-Stancu operators satisfy the recurrence relation
\begin{align}
S_n^{q,\alpha}(e_{m+1};x)
=
\frac{x}{[n]_q^{\,m}}
\sum_{s=0}^{m}
\binom{m}{s} q^{s}
[n-1]_q^{\,s}
S_{n-1}^{q,\frac{q\alpha}{1+\alpha}}
\left(e_s;\frac{x+\alpha}{1+\alpha}\right). \label{rec1}
\end{align}
\end{theorem}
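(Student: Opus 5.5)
We write $S_n^{q,\alpha}(e_{m+1};x)$ through the basis \eqref{oldbasis}, peel off one factor $[k]_q/[n]_q$ to lower the order of the $q$-binomial coefficient, and then recognize the remaining basis functions as $q$-Stancu basis functions of order $n-1$, with parameter $\alpha'=q\alpha/(1+\alpha)$, evaluated at $y=(x+\alpha)/(1+\alpha)$.

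\textbf{Step 1: reduce the order of the coefficient.} Since
\[
\frac{[k]_q}{[n]_q}\qbinom{n}{k}{q}=\qbinom{n-1}{k-1}{q}
\]
and the $k=0$ term vanishes, we get
\[
S_n^{q,\alpha}(e_{m+1};x)=\frac{1}{[n]_q^m}\sum_{k=0}^{n-1}[k+1]_q^{\,m}\qbinom{n-1}{k}{q}\frac{p_{n,k+1}^{q,\alpha}(x)}{\qbinom{n}{k+1}{q}} .
\]
Using $[k+1]_q=1+q[k]_q$ and the ordinary binomial theorem gives
\[
[k+1]_q^{\,m}=\sum_{s=0}^m\binom{m}{s}q^s[k]_q^{\,s}.
\]
This produces the outer sum in \eqref{rec1}.

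\textbf{Step 2: the key factorization.} This is the main step. I would show that
\[
\frac{p_{n,k+1}^{q,\alpha}(x)}{\qbinom{n}{k+1}{q}}\,\qbinom{n-1}{k}{q}=x\,p_{n-1,k}^{q,\alpha'}(y),\qquad 0\le k\le n-1 .
\]
Using the original product form \eqref{oldbasis}, this reduces to three checks.
\begin{enumerate}[label=(\alph*)]
\item The $i=0$ factor of $\prod_{i=0}^{k}(x+\alpha[i]_q)$ is exactly $x$.
\item For $i\ge1$, using $[i]_q=1+q[i-1]_q$, we have $x+\alpha[i]_q=(1+\alpha)\bigl(y+\alpha'[i-1]_q\bigr)$. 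Likewise, in the denominator, $1+\alpha[i]_q=(1+\alpha)\bigl(1+\alpha'[i-1]_q\bigr)$ for $i\ge1$, while the $i=0$ factor equals $1$.
\item For the second product we need $1-q^sx+\alpha[s]_q=(1+\alpha)\bigl(1-q^sy+\alpha'[s]_q\bigr)$. The right side equals $1+\alpha-q^sx-q^s\alpha+q\alpha[s]_q$. Since $1+q[s]_q-q^s=[s]_q$, this equals the left side.
\end{enumerate}
Counting factors, the numerator carries $(1+\alpha)^{k+(n-1-k)}$ and the denominator carries $(1+\alpha)^{n-1}$, so these powers cancel.

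Equivalently, in the representation \eqref{newbasis} this is the substitution $\gamma\mapsto q\gamma/(1+\gamma)$ and $x\mapsto y$, together with $(\gamma/(x+\gamma);q)_{k+1}=\frac{x}{x+\gamma}(q\gamma/(x+\gamma);q)_k$. However, the original product form makes the bookkeeping cleaner. The only real obstacle is getting the index shifts and the identity in (c) right.

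\textbf{Step 3: assemble.} Inserting the factorization, for each $s$ the inner sum is
\[
\sum_{k=0}^{n-1}[k]_q^{\,s}\,p_{n-1,k}^{q,\alpha'}(y)=[n-1]_q^{\,s}\sum_{k=0}^{n-1}\left(\frac{[k]_q}{[n-1]_q}\right)^{s}p_{n-1,k}^{q,\alpha'}(y)=[n-1]_q^{\,s}\,S_{n-1}^{q,\alpha'}(e_s;y).
\]
This gives \eqref{rec1}.

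For $n=1$, only the term $s=0$ survives (with $[0]_q^0=1$ and $S_0$ read as the identity on constants), and both sides equal $x$. This is consistent with $S_1^{q,\alpha}(e_{m+1};x)=x$, which follows since the $q$-Stancu operator with $n=1$ interpolates at $0$ and $1$.

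As a sanity check, $m=0$ returns $x$, matching Theorem \ref{moment}. For $m=1$, using $S_{n-1}^{q,\alpha'}(e_1;y)=y$ recovers the formula for $S_n^{q,\alpha}(e_2;x)$ obtained in Theorem \ref{moment} via $I_1+I_2$.
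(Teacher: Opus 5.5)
Your proof is correct, and its skeleton matches the paper's. You peel off one factor via $\frac{[k]_q}{[n]_q}\qbinom{n}{k}{q}=\qbinom{n-1}{k-1}{q}$, shift $k\to k+1$, expand $(1+q[k]_q)^m$, and recognize the inner sums as $[n-1]_q^{s}S_{n-1}^{q,\alpha'}(e_s;y)$.

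The difference is in how the key identification is justified.
\begin{itemize}
\item The paper stays in the Pochhammer form \eqref{newbasis}. It splits off the first factors $(\gamma/(x+\gamma);q)_{k+1}=\frac{x}{x+\gamma}(q\gamma/(x+\gamma);q)_k$ and $(\gamma/(1+\gamma);q)_n=\frac{1}{1+\gamma}(q\gamma/(1+\gamma);q)_{n-1}$, so the prefactor collapses to $x$. It then simply asserts that the remaining sum $\Omega_s$ equals $(q\gamma/(1+\gamma);q)_{n-1}S_{n-1}^{q,\alpha'}(e_s;y)$. The check that $\alpha\mapsto\alpha'$, $x\mapsto y$ sends $\gamma/(x+\gamma)$, $(x+\gamma)/(1+\gamma)$, $\gamma/(1+\gamma)$ to $q\gamma/(x+\gamma)$, $(x+\gamma)/(1+\gamma)$, $q\gamma/(1+\gamma)$ is not carried out.
\item You bypass the new representation. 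You prove the basis-level identity $\frac{[k+1]_q}{[n]_q}p^{q,\alpha}_{n,k+1}(x)=x\,p^{q,\alpha'}_{n-1,k}(y)$ factor by factor in \eqref{oldbasis}, using only $[i]_q=1+q[i-1]_q$ and $1-q^s+q[s]_q=[s]_q$.
\end{itemize}
Your route is more elementary and fully explicit, and it isolates a clean companion to the paper's identity \eqref{basrec}. The paper's route instead displays the parameter change as a mere shift $a\mapsto qa$ in the Pochhammer arguments, which is what its new representation is designed to exhibit. Your separate handling of $n=1$, where $[n-1]_q=0$ and $S_0$ is undefined, also covers a case the paper passes over.

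One slip in your parenthetical aside, which is not used in the argument. The substitution $\alpha\mapsto\alpha'=q\alpha/(1+\alpha)$ induces $\gamma\mapsto\gamma'=\alpha'/(1-q)=q\gamma/(1+\alpha)$, not $q\gamma/(1+\gamma)$. Since $\alpha+q\gamma=\gamma$, one gets $y+\gamma'=(x+\gamma)/(1+\alpha)$ and $1+\gamma'=(1+\gamma)/(1+\alpha)$. What becomes $q\gamma/(1+\gamma)$ is the ratio $\gamma/(1+\gamma)$, not $\gamma$ itself.
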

\begin{proof}
Taking $f(t)=e_{m+1}(t)$ in \eqref{qstancu}, we obtain
\begin{align*}
S_n^{q,\alpha}(e_{m+1};x)
&=
\frac{1}{\left(\dfrac{\gamma}{1+\gamma};q\right)_n}
\sum_{k=0}^{n}
\left(\frac{[k]_q}{[n]_q}\right)^{m+1}
\qbinom{n}{k}{q}
\left(\frac{x+\gamma}{1+\gamma}\right)^k
\left(\frac{\gamma}{x+\gamma};q\right)_k
\left(\frac{x+\gamma}{1+\gamma};q\right)_{n-k} \\
&=
\frac{1}{\left(\dfrac{\gamma}{1+\gamma};q\right)_n}
\sum_{k=1}^{n}
\left(\frac{[k]_q}{[n]_q}\right)^{m}
\qbinom{n-1}{k-1}{q}
\left(\frac{x+\gamma}{1+\gamma}\right)^k
\left(\frac{\gamma}{x+\gamma};q\right)_k
\left(\frac{x+\gamma}{1+\gamma};q\right)_{n-k}\\
&=
\frac{1}{\left(\dfrac{\gamma}{1+\gamma};q\right)_n}
\sum_{k=0}^{n-1}
\left(\frac{[k+1]_q}{[n]_q}\right)^{m}
\qbinom{n-1}{k}{q}
\left(\frac{x+\gamma}{1+\gamma}\right)^{k+1}
\left(\frac{\gamma}{x+\gamma};q\right)_{k+1}
\left(\frac{x+\gamma}{1+\gamma};q\right)_{n-k-1}.
\end{align*}
Here, by using the identity $[k+1]_q = 1+q[k]_q$ together with the binomial expansion, we obtain
\begin{align*}
&S_n^{q,\alpha}(e_{m+1};x) \\
&=
\frac{1}{\left(\dfrac{\gamma}{1+\gamma};q\right)_n}
\sum_{k=0}^{n-1}
\left(\frac{1+q[k]_q}{[n]_q}\right)^{m}
\qbinom{n-1}{k}{q}
\left(\frac{x+\gamma}{1+\gamma}\right)^{k+1}
\left(\frac{\gamma}{x+\gamma};q\right)_{k+1}
\left(\frac{x+\gamma}{1+\gamma};q\right)_{n-k-1} \\
&= \frac{1}{\left(\dfrac{\gamma}{1+\gamma};\, q\right)_{\!n} [n]_q^m}
   \sum_{k=0}^{n-1} \sum_{s=0}^{m}
   \binom{m}{s} q^s [k]_q^s
   \begin{bmatrix} n-1 \\ k \end{bmatrix}_{\!q}
   \left(\frac{x+\gamma}{1+\gamma}\right)^{\!k+1}
   \left(\frac{\gamma}{x+\gamma};\, q\right)_{\!k+1}
   \left(\frac{x+\gamma}{1+\gamma};\, q\right)_{\!n-k-1}.
\end{align*}
After carrying out the necessary rearrangements in this expression, we find
\begin{align}
S_n^{q,\alpha}(e_{m+1};x)
&=
\frac{1}{\left(\dfrac{\gamma}{1+\gamma};q\right)_n [n]_q^{\,m}}
\sum_{s=0}^{m} \binom{m}{s} q^{s}
\left(\frac{x+\gamma}{1+\gamma}\right)
\left(1-\frac{\gamma}{x+\gamma}\right) \notag\\
&\quad\times
\Bigg\{
\sum_{k=0}^{n-1}
[k]_q^{\,s}
\qbinom{n-1}{k}{q}
\left(\frac{x+\gamma}{1+\gamma}\right)^k
\left(\frac{q\gamma}{x+\gamma};q\right)_k
\left(\frac{x+\gamma}{1+\gamma};q\right)_{n-k-1}
\Bigg\} \notag\\[2mm]
&= \frac{1}{\left(\dfrac{\gamma}{1+\gamma};\, q\right)_{\!n} [n]_q^m}
   \left(\frac{x}{1+\gamma}\right)
   \sum_{s=0}^{m}
   \binom{m}{s} q^s [n-1]_q^s
   \notag \\[6pt]
&\quad \times
   \left\{
   \sum_{k=0}^{n-1}
   \left(\frac{[k]_q}{[n-1]_q}\right)^{\!s}
   \begin{bmatrix} n-1 \\ k \end{bmatrix}_{\!q}
   \left(\frac{x+\gamma}{1+\gamma}\right)^{\!k}
   \left(\frac{q\gamma}{x+\gamma};\, q\right)_{\!k}
   \left(\frac{x+\gamma}{1+\gamma};\, q\right)_{\!n-k-1}
   \right\} \label{kk}
\end{align}
Let the second sum in the last equality be denoted by
\[
\Omega_s := \sum_{k=0}^{n-1}
\left(\frac{[k]_q}{[n-1]_q}\right)^s
\qbinom{n-1}{k}{q}
\left(\frac{x+\gamma}{1+\gamma}\right)^k
\left(\frac{q\gamma}{x+\gamma};q\right)_k
\left(\frac{x+\gamma}{1+\gamma};q\right)_{n-k-1}.
\]
Then, replacing $\alpha$ by $\dfrac{q\alpha}{1+\alpha}$ and $x$ by
$\dfrac{x+\alpha}{1+\alpha}$ in the new representation of the
$q$-Stancu operators, we get
\begin{align*}
&S_{n-1}^{\,q,\frac{q\alpha}{1+\alpha}} \left(e_s;\frac{x+\alpha}{1+\alpha}\right) \\
&=
\frac{1}{\left(\dfrac{q\gamma}{1+\gamma};q\right)_{n-1}}
\sum_{k=0}^{n-1}
\left(\frac{[k]_q}{[n-1]_q}\right)^s
\qbinom{n-1}{k}{q} 
\left(\frac{x+\gamma}{1+\gamma}\right)^k
\left(\frac{q\gamma}{x+\gamma};q\right)_k
\left(\frac{x+\gamma}{1+\gamma};q\right)_{n-k-1} \\
&=
\frac{1}{\left(\dfrac{q\gamma}{1+\gamma};q\right)_{n-1}}\,\Omega_s .
\end{align*}
Substituting the last equality into \eqref{kk}, we arrived at
\begin{align*}
&S_n^{q,\alpha}(e_{m+1};x) \\
&=
\frac{1}{\left(\dfrac{\gamma}{1+\gamma};q\right)_n [n]_q^{\,m}}
\left(\frac{x}{1+\gamma}\right)
\sum_{s=0}^{m} \binom{m}{s} q^{s} [n-1]_q^{\,s}
\left(\frac{q\gamma}{1+\gamma};q\right)_{n-1}
S_{n-1}^{\,q,\frac{q\alpha}{1+\alpha}}
\left(e_s;\frac{x+\alpha}{1+\alpha}\right) \\
&=
\frac{x}{[n]_q^{\,m}}
\sum_{s=0}^{m} \binom{m}{s} q^{s} [n-1]_q^{\,s}
S_{n-1}^{\,q,\frac{q\alpha}{1+\alpha}}
\left(e_s;\frac{x+\alpha}{1+\alpha}\right),
\end{align*}
which yields the desired recurrence relation.
\end{proof}
We now obtain an alternative recurrence relation for these operators by employing a different approach.
For this purpose, we follow the method used in the proof of relation in \cite{videnskii} for the $q$-Bernstein operators.

\begin{lemma}
Let $n \in \mathbb{N}$. Then, for $0 \le k \le n$, the following identity holds:
\begin{align}
\frac{[k]_q}{[n]_q}\, p_{n,k}^{q,\alpha}(x)
=
p_{n,k}^{q,\alpha}(x)
-
(1-x)\, p_{n-1,k}^{q,\frac{q\alpha}{1+\alpha}}
\!\left(\frac{q x}{1+\alpha}\right).
\label{basrec}
\end{align}
\end{lemma}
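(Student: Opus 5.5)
The plan is to verify \eqref{basrec} directly from the original product form \eqref{oldbasis}, not from \eqref{newbasis}. The shifted parameter $\alpha'=q\alpha/(1+\alpha)$ and the shifted argument $y=qx/(1+\alpha)$ are chosen exactly so that each factor of $p_{n-1,k}^{q,\alpha'}(y)$ becomes a factor of $p_{n,k}^{q,\alpha}(x)$ divided by $1+\alpha$.

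First I rewrite the left-hand side. From $[n]_q-[k]_q=q^k[n-k]_q$ we get
\[
p_{n,k}^{q,\alpha}(x)-\frac{[k]_q}{[n]_q}p_{n,k}^{q,\alpha}(x)=q^k\frac{[n-k]_q}{[n]_q}\,p_{n,k}^{q,\alpha}(x),
\]
and $\frac{[n-k]_q}{[n]_q}\qbinom{n}{k}{q}=\qbinom{n-1}{k}{q}$. It therefore suffices to show that $(1-x)\,p_{n-1,k}^{q,\alpha'}(y)$ equals $q^k\qbinom{n-1}{k}{q}$ times the product quotient appearing in $p_{n,k}^{q,\alpha}(x)$. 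For $k=n$ both sides vanish, since $[n]_q/[n]_q=1$ on the left and $p_{n-1,n}\equiv 0$ on the right. So I assume $k\le n-1$.

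Next I transform the three products of $p_{n-1,k}^{q,\alpha'}(y)$, using $\alpha'[i]_q=q\alpha[i]_q/(1+\alpha)$ and $\alpha+q\alpha[i]_q=\alpha[i+1]_q$:
\begin{align*}
y+\alpha'[i]_q&=\frac{q\,(x+\alpha[i]_q)}{1+\alpha},\\
1-q^sy+\alpha'[s]_q&=\frac{1-q^{s+1}x+\alpha[s+1]_q}{1+\alpha},\\
1+\alpha'[i]_q&=\frac{1+\alpha[i+1]_q}{1+\alpha}.
\end{align*}
The first product therefore gives $q^k\prod_{i=0}^{k-1}(x+\alpha[i]_q)/(1+\alpha)^k$. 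The second gives $\prod_{s=1}^{n-1-k}(1-q^sx+\alpha[s]_q)/(1+\alpha)^{n-1-k}$. The denominator gives $\prod_{i=1}^{n-1}(1+\alpha[i]_q)/(1+\alpha)^{n-1}$.

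The powers of $1+\alpha$ cancel exactly, since $k+(n-1-k)=n-1$. Multiplying by $1-x$ supplies the missing $s=0$ factor $1-q^0x+\alpha[0]_q$ of the second product. The missing $i=0$ factor of the denominator is $1+\alpha[0]_q=1$. This yields exactly $q^k\qbinom{n-1}{k}{q}$ times the quotient in \eqref{oldbasis}, which proves the lemma.

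The only delicate step is the index shift $\alpha+q\alpha[s]_q=\alpha[s+1]_q$ together with the bookkeeping of the powers of $(1+\alpha)$; everything else is routine.
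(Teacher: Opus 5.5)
Your proof is correct. The first half coincides with the paper's: both use $[n]_q-[k]_q=q^k[n-k]_q$ and $\frac{[n-k]_q}{[n]_q}\qbinom{n}{k}{q}=\qbinom{n-1}{k}{q}$. The difference lies in how you identify $(1-x)\,p_{n-1,k}^{q,\alpha'}(y)$. You work in the original product form \eqref{oldbasis}, while the paper works in the Pochhammer form \eqref{newbasis}. There, with $a=\gamma/(x+\gamma)$, $b=(x+\gamma)/(1+\gamma)$ and $c=\gamma/(1+\gamma)$, the substitution $(\alpha,x)\mapsto\bigl(q\alpha/(1+\alpha),\,qx/(1+\alpha)\bigr)$ leaves $a$ unchanged and sends $b\mapsto qb$, $c\mapsto qc$. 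The factor $1-x$ then arises as $(1-b)/(1-c)$ after splitting off the first factors of $(b;q)_{n-k}$ and $(c;q)_n$. In your version the same substitution is the index shift $s\mapsto s+1$ (via $\alpha+q\alpha[s]_q=\alpha[s+1]_q$), the powers of $1+\alpha$ cancel, and $1-x$ is just the missing $s=0$ factor. The paper's route stays inside its new representation, where the parameter change is the compact rule above; this is the same kind of $q$-shift bookkeeping that drives \eqref{rec1} and the recurrence for $S_\infty^{q,\alpha}$. Your route is more elementary and self-contained. It does not need Lemma~\ref{pochrep}, and it never divides by $x+\gamma$, so the case $\alpha=0$, $x=0$ (where $\gamma/(x+\gamma)$ is $0/0$) needs no comment and the result is a polynomial identity in $x$. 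You also treat $k=n$ explicitly through the convention $p_{n-1,n}\equiv 0$. The paper passes over that case, even though its expression then contains the undefined symbol $(qb;q)_{-1}$.
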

\begin{proof}
First, using the identity $[n]_q = [k]_q + q^k [n-k]_q$, we obtain
\begin{align*}
\frac{[k]_q}{[n]_q} p_{n,k}^{q,\alpha}(x)
&=
\left(\frac{[n]_q-q^k[n-k]_q}{[n]_q}\right)p_{n,k}^{q,\alpha}(x) \\
&=
p_{n,k}^{q,\alpha}(x)
-
\frac{q^k[n-k]_q}{[n]_q}\,p_{n,k}^{q,\alpha}(x).
\end{align*}

Substituting the basis polynomials given in \eqref{newbasis} into this expression yields
\begin{align}
\frac{[k]_q}{[n]_q} \, p_{n,k}^{q,\alpha}(x)
&=
p_{n,k}^{q,\alpha}(x)
-
\frac{q^{k}[n-k]_q}{[n]_q}
\qbinom{n}{k}{q}
\left(\frac{x+\gamma}{1+\gamma}\right)^k
\frac{
\left(\dfrac{\gamma}{x+\gamma};q\right)_k
\left(\dfrac{x+\gamma}{1+\gamma};q\right)_{n-k}
}{
\left(\dfrac{\gamma}{1+\gamma};q\right)_n
} \nonumber \\
&=
p_{n,k}^{q,\alpha}(x)
-
\qbinom{n-1}{k}{q}
\left(q \left(\dfrac{x+\gamma}{1+\gamma}\right) \right)^k
\frac{
\left(\dfrac{\gamma}{x+\gamma};q\right)_k
\left(\dfrac{x+\gamma}{1+\gamma};q\right)_{n-k}
}{
\left(\dfrac{\gamma}{1+\gamma};q\right)_n
} \nonumber \\
&=  
p_{n,k}^{q,\alpha}(x)
-
(1-x)\,
\qbinom{n-1}{k}{q}
\left(q\left(\dfrac{x+\gamma}{1+\gamma}\right) \right)^k
\dfrac{\left(\dfrac{\gamma}{x+\gamma};q\right)_k
\left(q\left(\dfrac{x+\gamma}{1+\gamma}\right);q \right)_{n-k-1}}{\left(\dfrac{q\gamma}{1+\gamma};q\right)_{n-1}}. \label{58}
\end{align}

Finally, replacing $\alpha$ by $q\alpha/(1+\alpha)$ and $x$ by $qx/(1+\alpha)$ in \eqref{newbasis}, we get
\begin{align*}
p_{n-1,k}^{q,\frac{q\alpha}{1+\alpha}}\!\left(\frac{qx}{1+\alpha}\right)
=
\qbinom{n-1}{k}{q}
\left(q\left(\dfrac{x+\gamma}{1+\gamma}\right) \right)^k
\frac{\left(\dfrac{\gamma}{x+\gamma};q\right)_k
\left(q\left(\dfrac{x+\gamma}{1+\gamma}\right);q\right)_{n-k-1}}{\left(\dfrac{q\gamma}{1+\gamma};q\right)_{n-1}}.
\end{align*}

Substituting this expression into \eqref{58} gives
\[
\frac{[k]_q}{[n]_q} p_{n,k}^{q,\alpha}(x)
=
p_{n,k}^{q,\alpha}(x)
-
(1-x)\,
p_{n-1,k}^{q,\frac{q\alpha}{1+\alpha}}\!\left(\frac{qx}{1+\alpha}\right),
\]
which completes the proof.
\end{proof}
\begin{theorem}
Let $m\in\mathbb{N}_0$ and $\alpha\ge 0$. Then, for every $n\in\mathbb{N}$ and
$x\in[0,1]$, the $q$-Stancu operators satisfy the recurrence relation
\begin{align}
S_n^{q,\alpha}(e_{m+1};x)
&=
S_n^{q,\alpha}(e_m;x)
-
(1-x)\left(\frac{[n-1]_q}{[n]_q}\right)^m
S_{n-1}^{\,q,\frac{q\alpha}{1+\alpha}}
\left(e_m;\frac{q x}{1+\alpha}\right).
\label{rec2}
\end{align}
\end{theorem}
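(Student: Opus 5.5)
We prove \eqref{rec2} by multiplying the basis identity \eqref{basrec} by $\left([k]_q/[n]_q\right)^m$ and summing over $k$. Write the left-hand side of \eqref{rec2} as
\[
S_n^{q,\alpha}(e_{m+1};x)=\sum_{k=0}^{n}\left(\frac{[k]_q}{[n]_q}\right)^{m}\frac{[k]_q}{[n]_q}\,p_{n,k}^{q,\alpha}(x).
\]
Replacing $\frac{[k]_q}{[n]_q}p_{n,k}^{q,\alpha}(x)$ by the right-hand side of \eqref{basrec} splits the sum into two parts. The first part is $\sum_{k=0}^n ([k]_q/[n]_q)^m p_{n,k}^{q,\alpha}(x)=S_n^{q,\alpha}(e_m;x)$ by definition \eqref{qstancu}. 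The second part is
\[
-(1-x)\sum_{k=0}^{n}\left(\frac{[k]_q}{[n]_q}\right)^{m}p_{n-1,k}^{q,\frac{q\alpha}{1+\alpha}}\!\left(\frac{qx}{1+\alpha}\right).
\]

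Next we handle the range of summation in the second part. The basis polynomial $p_{n-1,k}$ is indexed only by $0\le k\le n-1$. For $k=n$ the identity \eqref{basrec} should be read with the term vanishing, since the factor $[n-k]_q=[0]_q=0$ appears in its derivation; equivalently, $\qbinom{n-1}{n}{q}=0$. Hence the sum effectively runs over $k=0,\dots,n-1$. I will note this explicitly, because \eqref{basrec} as stated covers $0\le k\le n$. The identity at $k=n$ is consistent: it reads $p_{n,n}=p_{n,n}-0$.

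Then we rescale the nodes. Writing
\[
\left(\frac{[k]_q}{[n]_q}\right)^m=\left(\frac{[n-1]_q}{[n]_q}\right)^m\left(\frac{[k]_q}{[n-1]_q}\right)^m,
\]
the remaining sum becomes
\[
\left(\frac{[n-1]_q}{[n]_q}\right)^m\sum_{k=0}^{n-1}\left(\frac{[k]_q}{[n-1]_q}\right)^m p_{n-1,k}^{q,\frac{q\alpha}{1+\alpha}}\!\left(\frac{qx}{1+\alpha}\right).
\]
By \eqref{qstancu} with parameters $n-1$ and $q\alpha/(1+\alpha)$, evaluated at the point $qx/(1+\alpha)$, this sum equals $S_{n-1}^{q,\frac{q\alpha}{1+\alpha}}\!\left(e_m;\frac{qx}{1+\alpha}\right)$, which gives \eqref{rec2}.

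The main subtlety is the edge case $n=1$, where $[n-1]_q=0$ and $S_0$ is not defined by \eqref{qstancu}. There the second sum has only the $k=0$ term, which equals $p_{0,0}=1$. For $m\ge1$ this term is multiplied by $0^m=0$, so \eqref{rec2} holds with the natural convention $S_0(e_m)=\delta_{m,0}$. For $m=0$ we get $S_1(e_1;x)=1-(1-x)=x$, which is correct. I would remark on this convention rather than treat it as a genuine obstacle. Otherwise the argument is a direct summation of the preceding lemma.
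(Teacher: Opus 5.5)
Your proposal is correct and follows the paper's proof: substitute \eqref{basrec} into $\sum_{k=0}^{n}([k]_q/[n]_q)^m\,\frac{[k]_q}{[n]_q}\,p_{n,k}^{q,\alpha}(x)$, drop the vanishing $k=n$ term of the second sum, and factor out $([n-1]_q/[n]_q)^m$ to recognize $S_{n-1}^{q,\frac{q\alpha}{1+\alpha}}(e_m;qx/(1+\alpha))$. Your remarks on the $k=n$ term and on the degenerate case $n=1$ only make explicit what the paper leaves implicit. In that case $[n-1]_q=0$, so for $m\ge1$ any finite value of $S_0(e_m)$ works.
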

\begin{proof}
Taking $\varphi(t)=e_{m+1}(t)$ in \eqref{qstancu}, we obtain
\begin{align*}
S_n^{q,\alpha}(e_{m+1};x)
&=
\sum_{k=0}^{n}
\left(\frac{[k]_q}{[n]_q}\right)^{m+1}
p_{n,k}^{q,\alpha}(x) =\sum_{k=0}^{n}\left(\frac{[k]_q}{[n]_q}\right)^{m}\frac{[k]_q}{[n]_q}p_{n,k}^{q,\alpha}(x).
\end{align*}

Using property \eqref{basrec} satisfied by the basis polynomials, we have
\begin{align*}
S_n^{q,\alpha}(e_{m+1};x)
&=
\sum_{k=0}^{n}
\left(\frac{[k]_q}{[n]_q}\right)^{m}
\left(
p_{n,k}^{q,\alpha}(x)
-
(1-x)\,
p_{n-1,k}^{\,q,\frac{q\alpha}{1+\alpha}}
\!\left(\frac{q x}{1+\alpha}\right)
\right) \\
&=
\sum_{k=0}^{n}
\left(\frac{[k]_q}{[n]_q}\right)^{m}
p_{n,k}^{q,\alpha}(x)
-
(1-x)
\sum_{k=0}^{n-1}
\left(\frac{[k]_q}{[n]_q}\right)^{m}
p_{n-1,k}^{\,q,\frac{q\alpha}{1+\alpha}}
\!\left(\frac{q x}{1+\alpha}\right).
\end{align*}

A direct calculation yields 
\begin{align*}
S_n^{q,\alpha}(e_{m+1};x)
&=
S_n^{q,\alpha}(e_m;x)
-
(1-x)
\left(\frac{[n-1]_q}{[n]_q}\right)^m
\sum_{k=0}^{n-1}
\left(\frac{[k]_q}{[n-1]_q}\right)^m
p_{n-1,k}^{\,q,\frac{q\alpha}{1+\alpha}}
\!\left(\frac{q x}{1+\alpha}\right) \\
&=
S_n^{q,\alpha}(e_m;x)
-
(1-x)
\left(\frac{[n-1]_q}{[n]_q}\right)^m
S_{n-1}^{\,q,\frac{q\alpha}{1+\alpha}}
\left(e_m;\frac{q x}{1+\alpha}\right).
\end{align*}
This completes the proof.
\end{proof}
It should be noted that, by taking $m=1,2$ in the recurrence relations \eqref{rec1} and \eqref{rec2}, one recovers the moments of the $q$-Stancu operators given in Theorem \ref{moment}. Moreover, setting $\alpha=0$ in \eqref{rec1} reduces the formula to the recurrence relation for the $q$-Bernstein polynomials in \cite{ostrovska2003}. Similarly, letting $\alpha=0$ in \eqref{rec2} yields the recurrence relation derived in \cite{videnskii}.

\section{On the Limit $q$-Stancu Operator}

The key step toward obtaining the limit operator lies in the transformation of the basis polynomials into the form \eqref{newbasis}. By expressing the basis polynomials in this form, as established in the preceding section, the infinite products appearing in the operators are shown to converge as $n \to \infty$. This enables an explicit definition of the limit $q$-Stancu operator. In this section, we introduce the limit form of the $q$-Stancu operator and investigate its basic properties.

Before constructing the desired limit operator, we present some auxiliary results, which are important in their own right and will also be needed in the subsequent steps.

\begin{lemma}
$\varphi \in C[0,1]$ and $\alpha \geq 0$. The series
\begin{align}
    \sum_{k=0}^{\infty} \frac{\varphi(1-q^k)}{(q;q)_k} \left(\frac{\gamma}{x+\gamma};q\right)_k \left(\frac{x+\gamma}{1+\gamma}\right)^k \label{conv1}
\end{align}
converges uniformly on every closed interval $[a,b] \subseteq [0,1)$.
\end{lemma}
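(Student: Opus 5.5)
The plan is to use the Weierstrass M-test on $[a,b]\subseteq[0,1)$, with the comparison series built from the $q$-binomial series \eqref{31}.

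Write $M=\|\varphi\|_{C[0,1]}$, and note that $1-q^k\in[0,1)$, so $|\varphi(1-q^k)|\le M$ for every $k$.

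\textbf{Bounding the Pochhammer factor.} For $x\in[0,1]$ and $\gamma\ge 0$ we have $0\le \gamma/(x+\gamma)\le 1$; in the degenerate case $x=\gamma=0$ we read the factor as $1$, or handle $\gamma=0$ separately since the factor is then identically $1$. Hence each factor $1-\frac{\gamma}{x+\gamma}q^j$ lies in $[0,1]$, and so
\[
0\le \left(\frac{\gamma}{x+\gamma};q\right)_k\le 1 .
\]
Also $1/(q;q)_k\le 1/(q;q)_\infty<\infty$, because $0<q<1$ and the infinite product converges to a positive number.

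\textbf{Uniform geometric bound.} Put $t(x)=\frac{x+\gamma}{1+\gamma}$. This is nondecreasing in $x$ and satisfies $t(x)<1$ exactly when $x<1$. On $[a,b]$ with $b<1$ we therefore get
\[
0\le t(x)\le \rho:=\frac{b+\gamma}{1+\gamma}<1 .
\]
Combining the three bounds, the $k$-th term of \eqref{conv1} is at most
\[
\frac{M}{(q;q)_\infty}\,\rho^k
\]
for all $x\in[a,b]$. The majorant series $\sum_k \rho^k$ converges, so the M-test gives uniform convergence on $[a,b]$.

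A slightly sharper majorant is available without the crude factor $1/(q;q)_\infty$. One can use $M\,\frac{1}{(q;q)_k}\rho^k$ and sum it via \eqref{31} with $a=0$, which gives $M/(\rho;q)_\infty$. Either choice works.

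\textbf{Main obstacle.} The only delicate point is that the bound must be uniform in $x$. The bound on $t(x)$ depends on $b<1$, and it degenerates as $b\to1$. This is why the statement excludes $1$: at $x=1$ we have $t=1$ and the terms need not decay. So the essential step is the monotonicity of $t(x)$, and the argument needs nothing from the earlier results beyond this elementary estimate.
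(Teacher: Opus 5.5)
Your proof is correct and follows essentially the same argument as the paper: bound $|\varphi|$ by its sup norm, use $(q;q)_k\ge(q;q)_\infty$ and $0\le\left(\frac{\gamma}{x+\gamma};q\right)_k\le 1$, and dominate the terms by the geometric series in $\frac{b+\gamma}{1+\gamma}<1$ via the Weierstrass M-test. Two small additions of yours are not in the paper: the treatment of the degenerate case $x=\gamma=0$, and the sharper majorant $M/(\rho;q)_\infty$ obtained from the $q$-binomial series with $a=0$.
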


\begin{proof}
Since $\varphi$ is continuous on $[0,1]$, it is also bounded. Thus there exists a positive constant $K$ such that $|\varphi(x)| \leq K$ for all $x \in [0,1]$.

Moreover, for every $k \in \mathbb{N}_0$ and $x \in [0,1]$, we have $(q;q)_k \geq (q;q)_\infty$ and $\left|\left(\frac{\gamma}{x+\gamma};q\right)_k\right| \leq 1$, so that
\begin{align*}
    \left|\frac{\varphi(1-q^k)\left(\frac{\gamma}{x+\gamma};q\right)_k}{(q;q)_k} \left(\frac{x+\gamma}{1+\gamma}\right)^k\right| \leq \frac{K}{(q;q)_\infty} \left|\frac{x+\gamma}{1+\gamma}\right|^k
\end{align*}
Now, since $[a,b]$ is an arbitrary closed subset of $[0,1)$,
\begin{align*}
    \left|\frac{x+\gamma}{1+\gamma}\right| \leq \frac{b+\gamma}{1+\gamma} < 1
\end{align*}
and therefore by the Weierstrass $M$-test, the series converges uniformly on $[a,b]$.
\end{proof}

\begin{corollary} \label{3.2}
Let $\varphi \in C[0,1]$ and $\alpha \ge 0$. Then, the series given by \eqref{conv1} converges for every $x \in [0,1)$.
\end{corollary}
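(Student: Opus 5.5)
The corollary follows directly from the preceding lemma by localizing to a point. Fix an arbitrary $x_0\in[0,1)$ and consider the degenerate closed interval $[a,b]=[x_0,x_0]$, or, if one prefers a nondegenerate interval, $[0,x_0]$. In either case $[a,b]\subseteq[0,1)$. The preceding lemma gives uniform convergence of \eqref{conv1} on $[a,b]$. Uniform convergence on a set implies pointwise convergence at each of its points, so the series converges at $x_0$. Since $x_0$ was arbitrary, the series converges on all of $[0,1)$.

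For a self-contained version, I would repeat the majorant estimate at the fixed point. Boundedness of $\varphi$ gives $|\varphi(1-q^k)|\le K$. The inequality $(q;q)_k\ge(q;q)_\infty>0$ holds because $0<q<1$. Since $0\le \gamma/(x_0+\gamma)\le 1$, every factor of the Pochhammer symbol lies in $[0,1]$, and so $\bigl|\bigl(\tfrac{\gamma}{x_0+\gamma};q\bigr)_k\bigr|\le 1$. Together these bound the $k$-th term by $\frac{K}{(q;q)_\infty}\bigl(\frac{x_0+\gamma}{1+\gamma}\bigr)^k$. The ratio satisfies $\frac{x_0+\gamma}{1+\gamma}<1$ exactly because $x_0<1$, so the majorant is a convergent geometric series. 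The comparison test then gives absolute convergence of \eqref{conv1} at $x_0$.

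The only point needing care is the edge case $\gamma=0$ (that is, $\alpha=0$) at $x_0=0$. There the expression $\gamma/(x+\gamma)$ is the indeterminate form $0/0$. I would handle it by the convention inherited from Lemma \ref{pochrep}(i): the product $(x+\gamma)^k\bigl(\tfrac{\gamma}{x+\gamma};q\bigr)_k$ is read as $\prod_{i}(x+\gamma(1-q^i))$. Equivalently, the term $\bigl(\tfrac{\gamma}{x+\gamma};q\bigr)_k\bigl(\tfrac{x+\gamma}{1+\gamma}\bigr)^k$ is interpreted as $\prod_{i=0}^{k-1}(x+\gamma-\gamma q^i)/(1+\gamma)^k$. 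In this case it is $x_0^k=0$ for $k\ge1$, so the series reduces to a single term and trivially converges. Apart from this, the main (and minor) obstacle is noting that the lemma, although stated for intervals, applies to singletons; the uniform bound depends only on $b<1$.
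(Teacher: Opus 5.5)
Your proposal is correct and matches the paper's approach: the paper gives no separate proof because the corollary follows from the preceding lemma by placing any $x\in[0,1)$ in a closed interval $[a,b]\subseteq[0,1)$, and your self-contained version repeats the lemma's majorant $\frac{K}{(q;q)_\infty}\bigl(\frac{x+\gamma}{1+\gamma}\bigr)^k$ at a single point. Your handling of the $0/0$ case $\gamma=x=0$ through the product form of Lemma~\ref{pochrep}(i) is a detail the paper leaves implicit. (For $x_0=0$ your interval $[0,x_0]$ is degenerate, but the singleton option or any $[0,b]$ with $b<1$ works.)
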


\begin{lemma}
Let $k = 0,1,2,\dots$. Then,
\begin{align}
\lim_{n \to \infty} p_{nk}^{q,\alpha}(x) =
\dfrac{
\left( \dfrac{\gamma}{x+\gamma}; q \right)_k 
\left( \dfrac{x+\gamma}{1+\gamma}; q \right)_\infty 
\left( \dfrac{x+\gamma}{1+\gamma} \right)^k
}{
(q; q)_k \left( \dfrac{\gamma}{1+\gamma}; q \right)_\infty 
} 
=: p_{\infty k}^{q,\alpha}(x), \quad x \in [0,1], \label{conv2}
\end{align}
and the convergence is uniform.
\end{lemma}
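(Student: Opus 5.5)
Fix $k\in\mathbb{N}_0$ and start from the representation \eqref{newbasis}. For $n\ge k$ we split $p_{n,k}^{q,\alpha}(x)$ into factors that depend on $n$ and a factor that does not:
\[
p_{n,k}^{q,\alpha}(x)=\frac{(q;q)_n}{(q;q)_{n-k}}\cdot\frac{\left(\frac{x+\gamma}{1+\gamma};q\right)_{n-k}}{\left(\frac{\gamma}{1+\gamma};q\right)_n}\cdot \frac{1}{(q;q)_k}\left(\frac{\gamma}{x+\gamma};q\right)_k\left(\frac{x+\gamma}{1+\gamma}\right)^k ,
\]
where we used \eqref{binompoch}. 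The last factor is independent of $n$. It is also bounded on $[0,1]$: since $0\le\gamma/(x+\gamma)\le1$ we have $|(\gamma/(x+\gamma);q)_k|\le1$, and $0\le (x+\gamma)/(1+\gamma)\le 1$. For $\gamma=0$, i.e. $\alpha=0$, the factor is read as $1\cdot x^k/(q;q)_k$, which is continuous. So it suffices to show that the $n$-dependent factors converge uniformly on $[0,1]$ and then multiply the limits. A product of a fixed bounded function with a uniformly convergent bounded sequence converges uniformly, and a product of two uniformly convergent bounded sequences does too.

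The first factor is $(q;q)_n/(q;q)_{n-k}\to(q;q)_\infty/(q;q)_\infty=1$. It does not depend on $x$, so this convergence is trivially uniform.

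For the denominator, $\left(\frac{\gamma}{1+\gamma};q\right)_n\to\left(\frac{\gamma}{1+\gamma};q\right)_\infty$. This limit is a positive constant because $0\le \gamma/(1+\gamma)<1$, so every factor $1-\frac{\gamma}{1+\gamma}q^j$ lies in $[1/(1+\gamma),1]$ and $\sum_j q^j<\infty$. Hence the reciprocals converge as well, and they stay bounded by $1/\left(\frac{\gamma}{1+\gamma};q\right)_\infty$.

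The main step is the uniform convergence of $g_m(x):=(c(x);q)_m$ to $(c(x);q)_\infty$, with $c(x)=(x+\gamma)/(1+\gamma)\in[0,1]$ and $m=n-k$. Every factor $1-c\,q^j$ lies in $[0,1]$, so $0\le g_m\le 1$ and the sequence $g_m$ is nonincreasing in $m$. Moreover,
\[
0\le g_m(x)-g_\infty(x)=g_m(x)\Bigl(1-\prod_{j\ge m}(1-c\,q^j)\Bigr)\le 1-\prod_{j\ge m}(1-q^j)\le \sum_{j\ge m}q^j=\frac{q^m}{1-q}.
\]
Here we used $\prod(1-a_j)\ge 1-\sum a_j$ for $a_j\in[0,1]$ together with $c\le 1$. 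The bound $q^m/(1-q)$ is independent of $x$ and tends to $0$, which gives uniform convergence on all of $[0,1]$, including $x=1$, where both sides vanish for $m\ge1$. The endpoint $x=1$ is where one might worry, because $c(1)=1$ and the product degenerates there. This estimate handles it directly, and it is the step where the new representation \eqref{newbasis} matters: in the original form \eqref{oldbasis} the individual products diverge.

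Combining the three limits gives \eqref{conv2}, and the convergence is uniform on $[0,1]$.
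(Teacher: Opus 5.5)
Your proof is correct and follows the paper's argument. Like you, the paper isolates the $x$-independent factor $\qbinom{n}{k}{q}\big/\left(\frac{\gamma}{1+\gamma};q\right)_n$, reads its limit off \eqref{binompoch}, and gets uniform convergence of $\left(\frac{x+\gamma}{1+\gamma};q\right)_{n-k}$ from the same estimate $0\le(\cdot\,;q)_{n-k}-(\cdot\,;q)_\infty\le 1-\prod_{i\ge n-k}(1-q^i)\le q^{n-k}/(1-q)$. You are more explicit than the paper about boundedness of the factors, the positivity of $\left(\frac{\gamma}{1+\gamma};q\right)_\infty$, the product-of-uniform-limits step, and the $\alpha=0$ reading, all of which the paper leaves implicit.
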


\begin{proof}
Considering equality \eqref{binompoch}, it is seen that
\begin{align}
\lim_{n \to \infty} \begin{bmatrix} n \\ k \end{bmatrix}_q \dfrac{1}{\left(\dfrac{\gamma}{1+\gamma}; q\right)_n} = \dfrac{1}{(q;q)_k \left(\dfrac{\gamma}{1+\gamma}; q\right)_\infty} \label{conv3}
\end{align}
Additionally, one has
\begin{align*}
0 &\leq \left(\dfrac{x+\gamma}{1+\gamma}; q\right)_{n-k} - \left(\dfrac{x+\gamma}{1+\gamma}; q\right)_\infty= \left(\dfrac{x+\gamma}{1+\gamma}; q\right)_{n-k} \left[1 - \left(\left(\dfrac{x+\gamma}{1+\gamma}\right) q^{n-k}; q\right)_\infty\right] \notag \\[6pt]
&\leq 1 - \prod_{i=n-k}^{\infty} \left(1 - \left(\dfrac{x+\gamma}{1+\gamma}\right) q^i\right) \leq 1 - \prod_{i=n-k}^{\infty} \left(1 - q^i\right)
\end{align*}
At this point, the following Bernoulli inequality, well known in the literature, will be useful, for $a_1, a_2, \ldots, a_n \in (0,1)$
\begin{align}
1 - \prod_{i=1}^{n}(1 - a_i) \leq \sum_{i=1}^{n} a_i
\end{align}
holds. Taking the limit $n \to \infty$ on both sides of this inequality, for every sequence $\{a_i\}_{i=1}^{\infty}$ with terms in $(0,1)$
\begin{align*}
1 - \prod_{i=1}^{\infty}(1 - a_i) \leq \sum_{i=1}^{\infty} a_i
\end{align*}
is satisfied. Therefore, for every $x \in [0,1]$
\begin{align*}
0 \leq \left(\dfrac{x+\gamma}{1+\gamma}; q\right)_{n-k} - \left(\dfrac{x+\gamma}{1+\gamma}; q\right)_\infty
\leq 1 - \prod_{i=n-k}^{\infty}\left(1 - q^i\right)
\leq \sum_{i=n-k}^{\infty} q^i = \dfrac{q^{n-k}}{1-q} 
\end{align*}
is obtained. Since the right-hand side of the last inequality converges to $0$ independently of the variable $x$,
\begin{align}
\lim_{n \to \infty} \left(\dfrac{x+\gamma}{1+\gamma}; q\right)_{n-k} = \left(\dfrac{x+\gamma}{1+\gamma}; q\right)_\infty \label{conv4}
\end{align}
the convergence is uniform on $[0,1]$. Considering expressions \eqref{conv3} together with \eqref{conv2}, the desired convergence follows immediately.
\end{proof}

\begin{lemma}
For $\alpha \geq 0$, $k \in \mathbb{N}_0$ and $x \in [0,1)$, we have $p_{\infty k}^{q,\alpha}(x) \geq 0$ and
\begin{align}
\sum_{k=0}^{\infty} p_{\infty k}^{q,\alpha}(x) = 1, \qquad x \in [0,1). \label{uralim}
\end{align}
\end{lemma}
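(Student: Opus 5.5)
Nonnegativity will be read off factor by factor from the definition \eqref{conv2}; the sum formula will come from identity \eqref{31} applied with $a=\gamma/(x+\gamma)$ and argument $x$ replaced by $b=(x+\gamma)/(1+\gamma)$.

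\emph{Nonnegativity.} For $x\in[0,1)$ and $\gamma\ge 0$, both $a=\gamma/(x+\gamma)$ and $b=(x+\gamma)/(1+\gamma)$ lie in $[0,1)$ when $x>0$. At $x=0$ we have $a=1$ if $\gamma>0$; if $\gamma=0$ we set $a=0$, by continuity, or simply note that the original product representation is nonnegative. Hence every factor $1-aq^j$ and $1-bq^j$ is nonnegative, so $(a;q)_k\ge0$ and $(b;q)_\infty>0$. Also $0<\gamma/(1+\gamma)<1$, which gives $(\gamma/(1+\gamma);q)_\infty>0$, and $(q;q)_k>0$. Therefore $p_{\infty k}^{q,\alpha}(x)\ge0$. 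Alternatively, $p_{\infty k}^{q,\alpha}(x)$ is the pointwise limit of the nonnegative $p_{nk}^{q,\alpha}(x)$ in \eqref{oldbasis}.

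\emph{Sum.} Factoring out the $k$-independent terms gives
\[
\sum_{k=0}^{\infty}p_{\infty k}^{q,\alpha}(x)=\frac{(b;q)_\infty}{\left(\frac{\gamma}{1+\gamma};q\right)_\infty}\sum_{k=0}^{\infty}\frac{(a;q)_k}{(q;q)_k}\,b^k .
\]
Since $|b|<1$ for $x\in[0,1)$ (this is exactly where $x<1$ is needed, consistent with Corollary \ref{3.2} for $\varphi\equiv1$), identity \eqref{31} yields
\[
\sum_{k=0}^{\infty}\frac{(a;q)_k}{(q;q)_k}\,b^k=\frac{(ab;q)_\infty}{(b;q)_\infty}.
\]
The key observation is that $ab=\gamma/(1+\gamma)$, independent of $x$. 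Substituting, the factors $(b;q)_\infty$ and $(\gamma/(1+\gamma);q)_\infty$ cancel, and the sum equals $1$.

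\emph{Degenerate cases.} The only care needed is at these points.
\begin{itemize}
\item If $x=0$ and $\gamma>0$, then $a=1$, so $(a;q)_k=0$ for $k\ge1$. The sum reduces to $(b;q)_\infty/(\gamma/(1+\gamma);q)_\infty$ with $b=\gamma/(1+\gamma)$, which equals $1$.
\item If $\gamma=0$, the expression $\gamma/(x+\gamma)$ at $x=0$ is interpreted as $0$, which is the $q$-Bernstein limit case. Then \eqref{31} with $a=0$ is Euler's identity and gives $(b;q)_\infty^{-1}$, so the sum is again $1$.
\end{itemize}
The main obstacle is justifying the applicability of \eqref{31}, namely $|b|<1$, together with the interpretation at $x=0$. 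Both are routine.
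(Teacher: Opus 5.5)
The paper states this lemma without proof, but your argument (identity \eqref{31} with $a=\gamma/(x+\gamma)$ and argument $b=(x+\gamma)/(1+\gamma)$, together with the cancellation $ab=\gamma/(1+\gamma)$) is the same computation the paper carries out in the proofs of Theorem~\ref{limmoment}(ii) and Theorem~\ref{3.11}, whose $m=0$ case gives the sum identity directly; your proof is correct, including the treatment of $x=0$. One small slip: for $\alpha=0$ you have $\gamma/(1+\gamma)=0$, so the bound should read $0\le \gamma/(1+\gamma)<1$, which still gives $\bigl(\gamma/(1+\gamma);q\bigr)_\infty>0$.
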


\begin{lemma} \label{3.5}
For $\varphi \in C[0,1]$ and $\alpha \geq 0$,
\begin{align*}
\lim_{x \to 1^-} \sum_{k=0}^{\infty} \varphi(1-q^k)\, p_{\infty k}^{q,\alpha}(x) = \varphi(1).
\end{align*}
\end{lemma}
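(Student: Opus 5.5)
Since Lemma (the partition of unity \eqref{uralim}) gives $\sum_{k\ge0} p_{\infty k}^{q,\alpha}(x)=1$ and $p_{\infty k}^{q,\alpha}(x)\ge 0$ for $x\in[0,1)$, I will estimate
\[
\Bigl|\sum_{k=0}^{\infty}\varphi(1-q^k)\,p_{\infty k}^{q,\alpha}(x)-\varphi(1)\Bigr|
\le \sum_{k=0}^{\infty}\bigl|\varphi(1-q^k)-\varphi(1)\bigr|\,p_{\infty k}^{q,\alpha}(x).
\]
The sum is absolutely convergent by Corollary \ref{3.2}, or simply by nonnegativity and boundedness. Fix $\varepsilon>0$. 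By continuity of $\varphi$ at $1$, there is $N$ with $|\varphi(1-q^k)-\varphi(1)|<\varepsilon$ for all $k>N$. Splitting the sum at $N$, the tail is at most $\varepsilon\sum_{k>N}p_{\infty k}^{q,\alpha}(x)\le\varepsilon$. The head is at most $2K\sum_{k=0}^{N}p_{\infty k}^{q,\alpha}(x)$, where $K=\|\varphi\|_\infty$. It therefore suffices to show that each individual $p_{\infty k}^{q,\alpha}(x)\to 0$ as $x\to1^-$, for fixed $k$.

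For this I use the explicit form \eqref{conv2}. The factors $(q;q)_k^{-1}$ and $\bigl(\tfrac{\gamma}{1+\gamma};q\bigr)_\infty^{-1}$ are finite positive constants; the latter is nonzero because $\gamma/(1+\gamma)<1$. The factors $\bigl(\tfrac{\gamma}{x+\gamma};q\bigr)_k$ and $\bigl(\tfrac{x+\gamma}{1+\gamma}\bigr)^k$ are bounded by $1$ in absolute value on $[0,1]$. The decisive factor is $\bigl(\tfrac{x+\gamma}{1+\gamma};q\bigr)_\infty$. Its $j=0$ term is $1-\tfrac{x+\gamma}{1+\gamma}=\tfrac{1-x}{1+\gamma}$, and the remaining product has absolute value at most $1$. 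Hence
\[
0\le p_{\infty k}^{q,\alpha}(x)\le \frac{1-x}{(1+\gamma)(q;q)_k\left(\frac{\gamma}{1+\gamma};q\right)_\infty}\longrightarrow 0\quad (x\to1^-).
\]
Consequently $\limsup_{x\to1^-}$ of the whole expression is at most $\varepsilon$, and since $\varepsilon$ is arbitrary the limit equals $\varphi(1)$.

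The case $\alpha=0$ needs a small check: then $\gamma=0$ and $\tfrac{\gamma}{x+\gamma}$ is read as $0$, so the Pochhammer factor equals $1$. This is consistent with Nowak's basis at $\alpha=0$, and the bound above is unaffected. The only genuinely delicate step is ensuring that the head bound is uniform. Since $N$ is fixed before letting $x\to1^-$, the finite sum of terms each tending to zero tends to zero, so no uniformity in $k$ is needed. I therefore expect no real obstacle beyond invoking \eqref{uralim} correctly.
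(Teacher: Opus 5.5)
Your proof is correct and follows essentially the same route as the paper. Both arguments use the partition of unity $\sum_k p_{\infty k}^{q,\alpha}(x)=1$ to bound the error by $\sum_k |\varphi(1-q^k)-\varphi(1)|\,p_{\infty k}^{q,\alpha}(x)$, split at a fixed index so that the tail is small by continuity of $\varphi$ at $1$, and make the finitely many head terms small because each $p_{\infty k}^{q,\alpha}(x)\to 0$ as $x\to1^-$. You differ only in two minor respects: you justify that last step with the explicit bound $p_{\infty k}^{q,\alpha}(x)\le C_k(1-x)$, coming from the $j=0$ factor of $\bigl(\frac{x+\gamma}{1+\gamma};q\bigr)_\infty$, instead of citing continuity and $p_{\infty k}^{q,\alpha}(1)=0$; and you fix $N$ before sending $x\to1^-$, which is the correct quantifier order that the paper's write-up (choosing $\delta$ before $M$) glosses over.
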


\begin{proof}
Let $\epsilon > 0$ be given. Since the functions $p_{\infty k}^{q,\alpha}(x)$ are continuous and $p_{\infty k}^{q,\alpha}(1) = 0$, there exists a $\delta > 0$ such that for $1 - \delta < x < 1$, $\left|p_{\infty k}^{q,\alpha}(x)\right| < \epsilon$ holds. On the other hand, since $\varphi \in C[0,1]$, there exists a positive integer $M$ such that for $k > M$, $|\varphi(1-q^k) - \varphi(1)| < \epsilon$ can be written. Now, considering property \eqref{uralim}, for every $x \in (1-\delta, 1)$:
\begin{align*}
\Delta &:= \left|\sum_{k=0}^{\infty} \varphi(1-q^k)\, p_{\infty k}^{q,\alpha}(x) - \varphi(1)\right| \notag \\[6pt]
&= \left|\sum_{k=0}^{\infty} \varphi(1-q^k)\, p_{\infty k}^{q,\alpha}(x) - \sum_{k=0}^{\infty} \varphi(1)\, p_{\infty k}^{q,\alpha}(x)\right| \notag \\[6pt]
&\leq \sum_{k=0}^{\infty} |\varphi(1-q^k) - \varphi(1)|\, p_{\infty k}^{q,\alpha}(x) \notag \\[6pt]
&= \sum_{k=0}^{M} |\varphi(1-q^k) - \varphi(1)|\, p_{\infty k}^{q,\alpha}(x) + \sum_{k=M+1}^{\infty} |\varphi(1-q^k) - \varphi(1)|\, p_{\infty k}^{q,\alpha}(x) \notag \\[6pt]
&\leq \sum_{k=0}^{M} 2\|\varphi\|\, \epsilon + \sum_{k=M+1}^{\infty} \epsilon\, p_{\infty k}^{q,\alpha}(x) \leq \left[2\|\varphi\|(M+1) + 1\right]\epsilon
\end{align*}
is obtained, and the last expression shows that $\lim_{x \to 1^-} \Delta = 0$.
\end{proof}

\begin{definition}
Let $\varphi \in C[0,1]$ and $\alpha \geq 0$. The limit $q$-Stancu operators are defined as

\begin{align}
S_{\infty}^{q,\alpha}(\varphi; x) =
\begin{cases}
\displaystyle\sum_{k=0}^{\infty} \varphi(1 - q^k)\, p_{\infty k}^{q,\alpha}(x), & x \in [0,1) \\[10pt]
\varphi(1), & x = 1
\end{cases} \label{limitstan}
\end{align}

where $p_{\infty k}^{q,\alpha}(x)$ denotes the basis functions given by \eqref{conv2}.
\end{definition}

In view of Corollary \ref{3.2} and Lemma \ref{3.5}, the operator $S_{\infty}^{q,\alpha}$ is well-defined on $C[0,1]$. For $\alpha = 0$, the limit $q$-Stancu operators reduce to the limit $q$-Bernstein operators introduced by Il'inskii and Ostrovska \cite{Ilinskii}. These operators are linear, positive, and preserve the values of functions at the endpoints. Moreover, the operator defined in \eqref{limitstan} is the limit of the sequence of polynomials $S_n^{q,\alpha}(\varphi;x)$, which justifies calling it the limit $q$-Stancu operator.

\begin{theorem}
Let $\varphi \in C[0,1]$. Then, 
\begin{align}
\lim_{n \to \infty} S_n^{q,\alpha}(\varphi; x) = S_{\infty}^{q,\alpha}(\varphi; x)
\end{align}
converges uniformly on the closed interval $[0,1]$.
\end{theorem}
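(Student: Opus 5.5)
We follow the scheme of Il'inskii and Ostrovska for the limit $q$-Bernstein operator. Throughout we use the representation \eqref{newbasis}, the uniform convergence $p_{nk}^{q,\alpha}\to p_{\infty k}^{q,\alpha}$ from \eqref{conv2}, the partition of unity \eqref{uralim}, and the endpoint behaviour in Lemma \ref{3.5}.

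Fix $\varphi\in C[0,1]$ and $\epsilon>0$. Both $S_n^{q,\alpha}$ and $S_\infty^{q,\alpha}$ reproduce constants: for $S_n^{q,\alpha}$ this is Theorem \ref{moment}, and for $S_\infty^{q,\alpha}$ it is \eqref{uralim} together with the value $\varphi(1)$ at $x=1$. So we may replace $\varphi$ by $\varphi-\varphi(1)$ and assume $\varphi(1)=0$. Since $1-q^k\to1$, uniform continuity gives an $M$ with $|\varphi(t)|<\epsilon$ for all $t\ge 1-q^{M+1}$. The nodes $[k]_q/[n]_q=(1-q^k)/(1-q^n)\ge 1-q^k$ then also lie in this region when $k>M$. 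Split
\[
S_n^{q,\alpha}(\varphi;x)-S_\infty^{q,\alpha}(\varphi;x)=\sum_{k=0}^{M}\Bigl[\varphi\Bigl(\tfrac{[k]_q}{[n]_q}\Bigr)p_{nk}^{q,\alpha}(x)-\varphi(1-q^k)p_{\infty k}^{q,\alpha}(x)\Bigr]+R_n(x)-R_\infty(x),
\]
where $R_n(x)$ and $R_\infty(x)$ are the tails over $k>M$. By positivity, $|R_n(x)|\le\epsilon\sum_k p_{nk}^{q,\alpha}(x)=\epsilon$, and likewise $|R_\infty(x)|\le\epsilon$ by \eqref{uralim}. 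At $x=1$ the bound on $R_\infty$ is trivial, since $S_\infty^{q,\alpha}(\varphi;1)=\varphi(1)=0$, and $p_{nk}^{q,\alpha}(1)=\delta_{kn}$ handles the rest at that point.

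For the finite head, we write each term as
\[
\Bigl[\varphi\Bigl(\tfrac{[k]_q}{[n]_q}\Bigr)-\varphi(1-q^k)\Bigr]p_{nk}^{q,\alpha}(x)+\varphi(1-q^k)\bigl[p_{nk}^{q,\alpha}(x)-p_{\infty k}^{q,\alpha}(x)\bigr].
\]
In the first bracket, $\bigl|[k]_q/[n]_q-(1-q^k)\bigr|\le q^n/(1-q^n)\to0$, and $0\le p_{nk}^{q,\alpha}\le1$, so uniform continuity of $\varphi$ makes it small uniformly in $x$. The second bracket is at most $\|\varphi\|\,\sup_{[0,1]}|p_{nk}^{q,\alpha}-p_{\infty k}^{q,\alpha}|$, which tends to $0$ by the uniform convergence in \eqref{conv2}. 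Here the limit basis, extended continuously to $x=1$ where it vanishes, is exactly what makes the convergence hold on the whole closed interval. Since only $M+1$ terms occur, the head is below $\epsilon$ for all large $n$, uniformly on $[0,1]$. Altogether $\limsup_n\|S_n^{q,\alpha}\varphi-S_\infty^{q,\alpha}\varphi\|\le 3\epsilon$.

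The main obstacle is making sure this argument really is uniform up to $x=1$. Uniformity of the tail bound depends only on positivity and the sum-to-one property, not on any estimate of $p_{nk}^{q,\alpha}$ in $x$, so that part is safe. The point that needs checking is that \eqref{conv2} holds uniformly on all of $[0,1]$, including near $x=1$ where $\bigl(\tfrac{x+\gamma}{1+\gamma};q\bigr)_\infty\to0$. This is covered by the bound $q^{n-k}/(1-q)$ obtained in the proof of \eqref{conv2}, together with the convergence of the $x$-independent factor \eqref{conv3}. For $x=1$ we also confirm that $S_\infty^{q,\alpha}(\varphi;1)=\varphi(1)=S_n^{q,\alpha}(\varphi;1)$, which follows from \eqref{newbasis} because the factor $\bigl(\tfrac{x+\gamma}{1+\gamma};q\bigr)_{n-k}$ vanishes at $x=1$ unless $k=n$.
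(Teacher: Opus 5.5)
Your proposal is correct and follows essentially the same route as the paper. Both arguments remove $\varphi(1)$ (you via linearity and reproduction of constants, the paper via the partition of unity \eqref{uralim}), split at an index $M$ chosen from continuity of $\varphi$ at $1$, bound the tails by positivity and the sum-to-one property, and control the finitely many head terms by the uniform convergence \eqref{conv2} together with $\varphi([k]_q/[n]_q)\to\varphi(1-q^k)$; your explicit bound $\bigl|[k]_q/[n]_q-(1-q^k)\bigr|\le q^n/(1-q^n)$ and your separate check at $x=1$ just make the uniformity slightly more transparent than the paper's appeal to end-point interpolation.
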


\begin{proof}
Since the operators satisfy the end-point interpolation property, it suffices to prove the hypothesis of the theorem for $x \in [0,1)$. Since $\varphi$ is  continuous, there holds

\begin{align}
\lim_{n \to \infty} \varphi\!\left(\frac{[k]_q}{[n]_q}\right)
= \varphi\!\left(\lim_{n \to \infty} \frac{[k]_q}{[n]_q}\right)
= \varphi\!\left(\lim_{n \to \infty} \frac{1 - q^k}{1 - q^n}\right)
= \varphi(1 - q^k). \label{endpoint}
\end{align}
Let $\epsilon > 0$ be arbitrary. For this value of $\epsilon$, one can find $a \in (0,1)$ such that $|\varphi(t) - \varphi(1)| < \epsilon/3$ for all $t \in [a, 1]$. Moreover, there exists a positive integer $M$ satisfying the inequality $1 - q^{M+1} \geq a$. Now, for $n > M$ and $x \in [0,1)$, define
\begin{align*}
\Delta := \left|S_n^{q,\alpha}(\varphi;\, x) - S_\infty^{q,\alpha}(\varphi;\, x)\right|.
\end{align*}
Taking properties \eqref{uralim} into account and applying the triangle inequality,
\begin{align*}
\Delta 
&= \left| \sum_{k=0}^{n} \left( \varphi\left(\frac{[k]_q}{[n]_q}\right) - \varphi(1) \right) p_{nk}^{q,\alpha}(x) 
- \sum_{k=0}^{\infty} \left( \varphi(1 - q^k) - \varphi(1) \right) p_{\infty k}^{q,\alpha}(x) \right| \\
&\leq \left| \sum_{k=0}^{M} \left( \varphi\left(\frac{[k]_q}{[n]_q}\right) - \varphi(1) \right) p_{nk}^{q,\alpha}(x) 
- \sum_{k=0}^{M} \left( \varphi(1 - q^k) - \varphi(1) \right) p_{\infty k}^{q,\alpha}(x) \right| \\
&\quad + \sum_{k=M+1}^{n} \left| \varphi\left(\frac{[k]_q}{[n]_q}\right) - \varphi(1) \right| p_{nk}^{q,\alpha}(x) 
+ \sum_{k=M+1}^{\infty} \left| \varphi(1 - q^k) - \varphi(1) \right| p_{\infty k}^{q,\alpha}(x) \\
&=: I_1 + I_2 + I_3.
\end{align*}
For each $k = 0,1,\dots,M$, it follows from \eqref{conv2} and \eqref{endpoint} that
\[
\lim_{n \to \infty} \left( \varphi\left(\frac{[k]_q}{[n]_q}\right) - \varphi(1) \right) p_{nk}^{q,\alpha}(x)
= \left( \varphi(1 - q^k) - \varphi(1) \right) p_{\infty k}^{q,\alpha}(x),
\]
and the convergence is uniform on $[0,1]$.

Since the sum $I_1$ contains only finitely many terms, for sufficiently large $n$, one can write $I_1 < \varepsilon/3$.

Moreover, since for $n > k$ we have $[k]_q/[n]_q \in (1 - q^k, 1)$, by the choice of $M$, for all $k > M$, the inequalities
\[
\left| \varphi\left(\frac{[k]_q}{[n]_q}\right) - \varphi(1) \right| < \varepsilon/3 
\quad \text{and} \quad 
\left| \varphi(1 - q^k) - \varphi(1) \right| < \varepsilon/3
\]
hold. 
In addition,
\begin{align*}
I_2 &< \frac{\varepsilon}{3} \sum_{k=M+1}^{n} p_{nk}^{q,\alpha}(x) < \frac{\varepsilon}{3}, \\
I_3 &< \frac{\varepsilon}{3} \sum_{k=M+1}^{\infty} p_{\infty k}^{q,\alpha}(x) < \frac{\varepsilon}{3}.
\end{align*}
Taking all three parts into account, we obtain
\begin{align*}
\Delta \leq I_1 + I_2 + I_3 < \varepsilon.
\end{align*}
Since $\varepsilon > 0$ is arbitrary, it follows that the sequence of operators
$\{S_n^{q,\alpha}\}$ converges uniformly to the limit operator 
$S_{\infty}^{q,\alpha}$.
\end{proof}

\begin{theorem} \label{limmoment}
Let $\alpha \geq 0$. Then, for all $x \in [0,1]$,
\begin{align*}
S_{\infty}^{q,\alpha}(e_0; x) &= 1, \\
S_{\infty}^{q,\alpha}(e_1; x) &= x, \\
S_{\infty}^{q,\alpha}(e_2; x) &= x - \frac{q x(1 - x)}{1 + \alpha}.
\end{align*}
\end{theorem}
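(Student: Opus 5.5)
The simplest route is to pass to the limit in Theorem \ref{moment}, using the uniform convergence theorem just proved. For each fixed $x\in[0,1]$ and $\varphi\in\{e_0,e_1,e_2\}$, that theorem gives $S_n^{q,\alpha}(\varphi;x)\to S_\infty^{q,\alpha}(\varphi;x)$. The first two identities are immediate, since $S_n^{q,\alpha}(e_0;x)=1$ and $S_n^{q,\alpha}(e_1;x)=x$ for every $n$. At $x=1$ all three formulas also follow directly from the definition \eqref{limitstan}, because $S_\infty^{q,\alpha}(\varphi;1)=\varphi(1)=1$ and the right-hand side $x-qx(1-x)/(1+\alpha)$ equals $1$ there.

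For $e_2$ I would not use the simplified form stated in Theorem \ref{moment}. Instead I would use the exact expression produced in its proof, namely $I_1+I_2$ with
\[
I_1=\frac{x}{[n]_q},\qquad I_2=\frac{q[n-1]_q}{[n]_q}\cdot\frac{x\bigl(x+(1-q)\gamma\bigr)}{1+(1-q)\gamma}.
\]
Since $(1-q)\gamma=\alpha$, this reads
\[
S_n^{q,\alpha}(e_2;x)=\frac{x}{[n]_q}+\frac{q[n-1]_q}{[n]_q}\cdot\frac{x(x+\alpha)}{1+\alpha}.
\]
As $n\to\infty$ we have $[n]_q\to 1/(1-q)$ and $[n-1]_q/[n]_q\to1$. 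Hence the limit is
\[
(1-q)x+\frac{qx(x+\alpha)}{1+\alpha}=x-qx+\frac{qx(x+\alpha)}{1+\alpha}.
\]
One line of algebra gives $-qx+\dfrac{qx(x+\alpha)}{1+\alpha}=\dfrac{qx(x-1)}{1+\alpha}$, which yields $x-\dfrac{qx(1-x)}{1+\alpha}$, as claimed.

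The main obstacle is that the displayed statement of Theorem \ref{moment} for $e_2$ does not visibly agree with the $I_1+I_2$ computation: its limit would give $\frac{x(x+\alpha)+(1-q)x(1-x)}{1+\alpha}$, which differs from the claimed formula. I would therefore rely on the computation inside the proof, and double-check it at $\alpha=0$, where it recovers the known $q$-Bernstein second moment $x^2+x(1-x)/[n]_q$ and the known limit value $x-qx(1-x)$. As a cross-check independent of Theorem \ref{moment}, I could apply \eqref{31} directly to the series in \eqref{limitstan}, writing $(1-q^k)^2=1-2q^k+q^{2k}$ and absorbing each $q^{jk}$ into $x^k$.
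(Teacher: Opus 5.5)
Your route is correct, but it differs from the paper's. The paper works directly with the series in \eqref{limitstan}. Part (i) is \eqref{uralim}. For $e_1$ (and, ``along the same lines,'' for $e_2$) it expands $(1-q^k)^j$ into powers $q^{jk}$, absorbs each into $\bigl(\frac{x+\gamma}{1+\gamma}\bigr)^k$, and sums with \eqref{31}. That is exactly the cross-check you mention at the end, and it is the method the paper then uses for all $m$ in Theorem \ref{3.11}.

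You instead obtain the limit moments as pointwise limits of the finite-$n$ moments, via the uniform convergence theorem. Given what precedes the statement, this is shorter, it makes consistency with Theorem \ref{moment} manifest, and it handles $x=1$ cleanly. The price is twofold. It rests on the heavier convergence theorem, which itself uses \eqref{uralim}. It also does not produce a closed form for general $m$. The paper's series computation is self-contained and generalizes immediately to all $m$.

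Your ``main obstacle'' paragraph is mistaken: there is no discrepancy. Since $q[n-1]_q=[n]_q-1$,
\[
I_1+I_2=\frac{x}{[n]_q}+\Bigl(1-\frac{1}{[n]_q}\Bigr)\frac{x(x+\alpha)}{1+\alpha}=\frac{1}{1+\alpha}\Bigl\{x(x+\alpha)+\frac{x(1-x)}{[n]_q}\Bigr\},
\]
which is exactly the displayed formula of Theorem \ref{moment}. Its limit $\frac{x(x+\alpha)+(1-q)x(1-x)}{1+\alpha}$ expands to $\frac{x+\alpha x-qx+qx^2}{1+\alpha}=x-\frac{qx(1-x)}{1+\alpha}$, the claimed value. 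So you can pass to the limit in the stated form of Theorem \ref{moment} directly and drop that remark. The rest of your argument stands as written.
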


\begin{proof}
\item[i)] From the representation \eqref{limitstan} of the limit $q$-Stancu operators, by the property \eqref{uralim}, one writes
\begin{align*}
    S_{\infty}^{q, \alpha}(e_{0} ; x) = \sum_{k=0}^{\infty} p_{\infty k}^{q, \alpha}(x) = 1.
\end{align*}

\item[ii)] When taking $\varphi(t) = e_{1}(t)$ in \eqref{limitstan}, we have
\begin{align*}
    S_{\infty}^{q, \alpha}(e_{1} ; x) &= \dfrac{\left(\dfrac{x+\gamma}{1+\gamma} ; q\right)_{\infty}}{\left(\dfrac{\gamma}{1+\gamma} ; q\right)_{\infty}} \sum_{k=0}^{\infty} (1-q^k) \dfrac{\left(\dfrac{\gamma}{x+\gamma} ; q\right)_{k} \left(\dfrac{x+\gamma}{1+\gamma}\right)^{k}}{(q ; q)_{k}} \\ 
 &= \frac{\left(\dfrac{x+\gamma}{1+\gamma} ; q\right)_{\infty}}{\left(\dfrac{\gamma}{1+\gamma} ; q\right)_{\infty}} \sum_{k=0}^{\infty} \dfrac{\left(\dfrac{\gamma}{x+\gamma} ; q\right)_{k} \left(\dfrac{x+\gamma}{1+\gamma}\right)^{k}}{(q ; q)_{k}} \nonumber \\
&- \dfrac{\left(\dfrac{x+\gamma}{1+\gamma} ; q\right)_{\infty}}{\left(\dfrac{\gamma}{1+\gamma} ; q\right)_{\infty}} \sum_{k=0}^{\infty} \dfrac{\left(\dfrac{\gamma}{x+\gamma} ; q\right)_{k} \left(q \dfrac{x+\gamma}{1+\gamma}\right)^{k}}{(q ; q)_{k}}
\end{align*}

By virtue of \eqref{31} and part (i) of Theorem \ref{limmoment}, it can be written as
\begin{align*}
S_{\infty}^{q, \alpha}(e_{1} ; x) &= S_{\infty}^{q, \alpha}(e_{0} ; x) - \dfrac{\left(\dfrac{x+\gamma}{1+\gamma} ; q\right)_{\infty}}{\left(\dfrac{\gamma}{1+\gamma} ; q\right)_{\infty}} \cdot \dfrac{\left(q \left(\dfrac{\gamma}{1+\gamma} \right) ; q\right)_{\infty}}{\left(q \left(\dfrac{x+\gamma}{1+\gamma} \right) ; q\right)_{\infty}} \\
&= 1 - \dfrac{\left(1 - \dfrac{x+\gamma}{1+\gamma}\right)}{\left(1 - \dfrac{\gamma}{1+\gamma}\right)} = 1 - (1 - x) = x
\end{align*}
\item[iii)] The proof for part (iii) proceeds along the same lines as the results established above.
\end{proof}

\begin{corollary}
Taking into account the moments provided in Theorem \ref{limmoment}, the moments of the limit $q$-Bernstein operator are obtained for $\alpha = 0$ \cite{Ilinskii}.
\end{corollary}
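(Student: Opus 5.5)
The Corollary follows by specializing Theorem \ref{limmoment} to $\alpha=0$ and checking that the basis functions and the operator coincide with those of Il'inskii and Ostrovska. The plan has three steps.

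First, I set $\alpha=0$, so that $\gamma=\alpha/(1-q)=0$. In \eqref{conv2} this gives
\[
\left(\frac{\gamma}{x+\gamma};q\right)_k=(0;q)_k=1 \quad (x\in(0,1)), \qquad \frac{x+\gamma}{1+\gamma}=x, \qquad \left(\frac{\gamma}{1+\gamma};q\right)_\infty=1.
\]
Hence
\[
p_{\infty k}^{q,0}(x)=\frac{x^k (x;q)_\infty}{(q;q)_k},
\]
which are exactly the limit $q$-Bernstein basis functions. Consequently $S_\infty^{q,0}$, as defined in \eqref{limitstan}, coincides with the limit $q$-Bernstein operator $B_\infty^q$ of \cite{Ilinskii}.

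Second, I substitute $\alpha=0$ into the three formulas of Theorem \ref{limmoment}. This gives $S_\infty^{q,0}(e_0;x)=1$ and $S_\infty^{q,0}(e_1;x)=x$. For the second moment it gives
\[
S_\infty^{q,0}(e_2;x)=x-qx(1-x)=qx^2+(1-q)x,
\]
which agrees with the known moments of $B_\infty^q$.

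The only delicate point is $x=0$. There the expression $\gamma/(x+\gamma)$ is the indeterminate form $0/0$ when $\gamma=0$, so the first simplification cannot be read off directly. I would handle it by working with the product form from Lemma \ref{pochrep}(i), namely
\[
\prod_{i=0}^{k-1}\bigl(x+\alpha[i]_q\bigr)\Big/(x+\gamma)^k,
\]
which equals $1$ when $\alpha=0$ and $x\neq0$. I then use continuity in $x$, or simply the endpoint behaviour: at $x=0$ every limit $q$-Bernstein basis function except the $k=0$ term vanishes, so the operator reduces to $\varphi(0)$. Once this point is settled, the identification is immediate.
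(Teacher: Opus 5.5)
Your proposal is correct and takes the same route as the paper: specialize Theorem \ref{limmoment} and the basis \eqref{conv2} to $\alpha=0$, $\gamma=0$. Your check that $x-qx(1-x)=x^2+(1-q)x(1-x)$ matches the Il'inskii--Ostrovska second moment.

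The point $x=0$, which the paper passes over, can be handled more directly than by your continuity argument. By Lemma \ref{pochrep}(i), the factor $(x+\gamma)^k\left(\frac{\gamma}{x+\gamma};q\right)_k$ is just the polynomial $\prod_{i=0}^{k-1}(x+\alpha[i]_q)$, which equals $x^k$ for every $x$ when $\alpha=0$. Hence $p_{\infty k}^{q,0}(x)=x^k(x;q)_\infty/(q;q)_k$ holds on all of $[0,1)$, and no $0/0$ form ever arises.
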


Thus far, the first three moments of the limit $q$-Stancu operator have been presented. In a general setting, all moments are expressed in the following theorem.

\begin{theorem} \label{3.11}
For $\alpha \geq 0$, the moments of the limit $q$-Stancu operator for every $x \in [0,1]$ are given by
\begin{equation}
S_{\infty}^{q, \alpha}(e_{m} ; x) = \sum_{s=0}^{m} \binom{m}{s} (-1)^s \dfrac{\left(\dfrac{x+\gamma}{1+\gamma} ; q\right)_{s}}{\left(\dfrac{\gamma}{1+\gamma} ; q\right)_{s}}, \quad m = 0,1, \dots \label{allmoment}
\end{equation}
\end{theorem}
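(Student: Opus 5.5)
For $x\in[0,1)$ the plan is to expand $e_m(1-q^k)=(1-q^k)^m=\sum_{s=0}^m\binom{m}{s}(-1)^s q^{sk}$ by the ordinary binomial theorem, insert this into the series \eqref{limitstan}, and exchange the finite sum over $s$ with the series over $k$. The exchange is legitimate because each of the $m+1$ resulting series converges absolutely: each is dominated termwise by the convergent series of Corollary \ref{3.2}, since $0<q^{sk}\le 1$. This gives
\[
S_\infty^{q,\alpha}(e_m;x)=\sum_{s=0}^m\binom{m}{s}(-1)^s\,
\frac{\left(\dfrac{x+\gamma}{1+\gamma};q\right)_\infty}{\left(\dfrac{\gamma}{1+\gamma};q\right)_\infty}
\sum_{k=0}^\infty\frac{\left(\dfrac{\gamma}{x+\gamma};q\right)_k}{(q;q)_k}\left(q^s\frac{x+\gamma}{1+\gamma}\right)^k .
\]

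The inner series is evaluated by the $q$-binomial theorem \eqref{31}, with $a=\gamma/(x+\gamma)$ and argument $q^s(x+\gamma)/(1+\gamma)$. This argument lies in $[0,1)$ for $x<1$, as \eqref{31} requires. Since $a\cdot q^s(x+\gamma)/(1+\gamma)=q^s\gamma/(1+\gamma)$, the inner sum equals
\[
\frac{\left(q^s\dfrac{\gamma}{1+\gamma};q\right)_\infty}{\left(q^s\dfrac{x+\gamma}{1+\gamma};q\right)_\infty}.
\]
Next I use the splitting $(c;q)_\infty=(c;q)_s\,(cq^s;q)_\infty$. The infinite tails cancel against the prefactor, and the $s$-th summand becomes
\[
\frac{\left(\dfrac{x+\gamma}{1+\gamma};q\right)_s}{\left(\dfrac{\gamma}{1+\gamma};q\right)_s},
\]
which is exactly \eqref{allmoment}. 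This is the same mechanism as in part (ii) of Theorem \ref{limmoment}, now done for general $s$. The identities can also be checked: $m=0$ gives $1$ and $m=1$ gives $1-(1-x)=x$, consistent with Theorem \ref{limmoment}.

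It remains to treat $x=1$, where $S_\infty^{q,\alpha}(e_m;1)=1$ by definition. On the right of \eqref{allmoment}, $\bigl(\tfrac{x+\gamma}{1+\gamma};q\bigr)_s$ contains the factor $1-\tfrac{1+\gamma}{1+\gamma}=0$ for every $s\ge1$, so only the $s=0$ term survives and the right-hand side is also $1$. Alternatively, one can pass to the limit $x\to1^-$ using Lemma \ref{3.5} and the continuity of the finite right-hand side.

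I expect no deep obstacle. The only points needing care are the justification of the termwise rearrangement and the condition $|q^s(x+\gamma)/(1+\gamma)|<1$, both of which are immediate for $x<1$. The degenerate case $\gamma=0$ (that is, $\alpha=0$) should be noted separately: there $a=0$, and the formula reduces to the known limit $q$-Bernstein moments, so the same computation applies verbatim.
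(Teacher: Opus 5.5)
Your proposal is correct and follows essentially the same route as the paper: binomial expansion of $(1-q^k)^m$, interchange with the series over $k$, evaluation of each inner series by the $q$-binomial theorem \eqref{31}, and cancellation of the infinite products down to $(\cdot;q)_s$. The only difference is that you also justify the interchange of sums and treat $x=1$ explicitly; the paper leaves both points implicit.
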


\begin{proof}
By the definition of the limit $q$-Stancu operators, we can write
\begin{align*}
    S_{\infty}^{q, \alpha}(e_{m} ; x) = \dfrac{\left(\dfrac{x+\gamma}{1+\gamma} ; q\right)_{\infty}}{\left(\dfrac{\gamma}{1+\gamma} ; q\right)_{\infty}} \sum_{k=0}^{\infty} (1-q^k)^m \dfrac{\left(\dfrac{\gamma}{x+\gamma} ; q\right)_{k} \left(\dfrac{x+\gamma}{1+\gamma}\right)^{k}}{(q ; q)_{k}}.
\end{align*}
By employing the binomial expansion, it follows that
\begin{align*}
    S_{\infty}^{q, \alpha}(e_{m} ; x) &= \dfrac{\left(\dfrac{x+\gamma}{1+\gamma} ; q\right)_{\infty}}{\left(\dfrac{\gamma}{1+\gamma} ; q\right)_{\infty}} \sum_{k=0}^{\infty} \sum_{s=0}^{m} \binom{m}{s} (-1)^s q^{ks} \dfrac{\left(\dfrac{\gamma}{x+\gamma} ; q\right)_{k} \left(\dfrac{x+\gamma}{1+\gamma}\right)^{k}}{(q ; q)_{k}} \nonumber \\
    &= \dfrac{\left(\dfrac{x+\gamma}{1+\gamma} ; q\right)_{\infty}}{\left(\dfrac{\gamma}{1+\gamma} ; q\right)_{\infty}} \sum_{s=0}^{m} \binom{m}{s} (-1)^s \sum_{k=0}^{\infty} \dfrac{\left(\dfrac{\gamma}{x+\gamma} ; q\right)_{k} \left(q^s \left(\dfrac{x+\gamma}{1+\gamma} \right)\right)^{k}}{(q ; q)_{k}}.
\end{align*}
By taking advantage of the relation \eqref{31}, we arrive at the following expression
\begin{align*}
    S_{\infty}^{q, \alpha}(e_{m} ; x) &= \dfrac{\left(\dfrac{x+\gamma}{1+\gamma} ; q\right)_{\infty}}{\left(\dfrac{\gamma}{1+\gamma} ; q\right)_{\infty}} \sum_{s=0}^{m} \binom{m}{s} (-1)^s \dfrac{\left(q^s \left(\dfrac{\gamma}{1+\gamma} \right); q\right)_{\infty}}{\left(q^s \left( \dfrac{x+\gamma}{1+\gamma} \right) ; q\right)_{\infty}}= \sum_{s=0}^{m} \binom{m}{s} (-1)^s \dfrac{\left(\dfrac{x+\gamma}{1+\gamma} ; q\right)_{s}}{\left(\dfrac{\gamma}{1+\gamma} ; q\right)_{s}}.
\end{align*}
Consequently, the desired result is obtained.
\end{proof}

\begin{theorem}
For $\alpha \geq 0$, the following recurrence relation holds for every $x \in [0,1]$
\begin{align*}
S_{\infty}^{q, \alpha}(e_{m+1} ; x) = S_{\infty}^{q, \alpha}(e_{m} ; x) - (1-x) S_{\infty}^{q, \frac{q \alpha}{1+\alpha}}\left(e_{m} ; \dfrac{q x}{1+\alpha}\right).
\end{align*}
\end{theorem}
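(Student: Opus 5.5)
The plan is to verify the recurrence directly from the closed form \eqref{allmoment} of Theorem \ref{3.11}, which holds for every $x\in[0,1]$ and every $\alpha\ge0$. Write
\[
a=\frac{x+\gamma}{1+\gamma},\qquad b=\frac{\gamma}{1+\gamma},\qquad \gamma=\frac{\alpha}{1-q}.
\]
Then \eqref{allmoment} reads
\[
S_\infty^{q,\alpha}(e_m;x)=\sum_{s=0}^m\binom{m}{s}(-1)^s\frac{(a;q)_s}{(b;q)_s}.
\]

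\textbf{Step 1: the shifted parameters.} Compute the analogues of $a$ and $b$ after replacing $\alpha$ by $\alpha'=q\alpha/(1+\alpha)$ and $x$ by $x'=qx/(1+\alpha)$. This was already done implicitly in the proof of \eqref{basrec}. With $\gamma'=\alpha'/(1-q)$ one gets
\[
\frac{x'+\gamma'}{1+\gamma'}=qa,\qquad \frac{\gamma'}{1+\gamma'}=qb.
\]
To check this, note $1+\gamma'=(1+\alpha+q\gamma)/(1+\alpha)$ and $1+\alpha+q\gamma=1+\gamma$, since $\alpha=(1-q)\gamma$. Hence
\[
S_\infty^{q,\alpha'}(e_m;x')=\sum_{s=0}^m\binom{m}{s}(-1)^s\frac{(qa;q)_s}{(qb;q)_s}.
\]

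\textbf{Step 2: the key identity.} Since $1-a=(1-x)/(1+\gamma)$ and $1-b=1/(1+\gamma)$, we have $(1-a)/(1-b)=1-x$. Therefore
\[
\frac{(a;q)_{s+1}}{(b;q)_{s+1}}=(1-x)\frac{(qa;q)_s}{(qb;q)_s}.
\]

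\textbf{Step 3: Pascal and reindexing.} Take the difference $S_\infty^{q,\alpha}(e_{m+1};x)-S_\infty^{q,\alpha}(e_m;x)$ and apply Pascal's rule $\binom{m+1}{s}-\binom{m}{s}=\binom{m}{s-1}$, with $\binom{m}{-1}=0$. The difference becomes
\[
\sum_{s=1}^{m+1}\binom{m}{s-1}(-1)^s\frac{(a;q)_s}{(b;q)_s}.
\]
Shifting the index $s=j+1$ and using Step 2, this equals
\[
-(1-x)\sum_{j=0}^m\binom{m}{j}(-1)^j\frac{(qa;q)_j}{(qb;q)_j},
\]
which is $-(1-x)S_\infty^{q,\alpha'}(e_m;x')$ by Step 1. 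This is the claimed recurrence.

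The endpoint $x=1$ needs no separate treatment, because \eqref{allmoment} is stated for all of $[0,1]$; alternatively, both sides equal $1$ there by interpolation. The only step needing care is the parameter bookkeeping in Step 1, specifically the identity $1+\alpha+q\gamma=1+\gamma$.

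As an alternative route, one could let $n\to\infty$ in \eqref{rec2}. This would use $[n-1]_q/[n]_q\to1$ and the uniform convergence $S_n^{q,\alpha}\to S_\infty^{q,\alpha}$ from the preceding theorem. Uniformity is what allows evaluation at the moving point $qx/(1+\alpha)$ while the parameter is fixed at $q\alpha/(1+\alpha)$, which is harmless. The direct computation above avoids these limit arguments.
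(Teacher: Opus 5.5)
Your proof is correct and follows the paper's own argument. Both start from the closed form \eqref{allmoment}, apply Pascal's rule with an index shift, factor $(a;q)_{s+1}/(b;q)_{s+1}=(1-x)\,(qa;q)_s/(qb;q)_s$, and identify the remaining sum as \eqref{allmoment} with $\alpha\mapsto q\alpha/(1+\alpha)$, $x\mapsto qx/(1+\alpha)$; your explicit check that $1+\alpha+q\gamma=1+\gamma$, which makes the shifted parameters $qa$ and $qb$, fills in a step the paper only asserts.
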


\begin{proof}
From the expression \eqref{limitstan}, it can be written that
\begin{align*}
    S_{\infty}^{q, \alpha}(e_{m+1} ; x) = \sum_{s=0}^{m+1} \binom{m+1}{s} (-1)^s \frac{\left(\dfrac{x+\gamma}{1+\gamma} ; q\right)_{s}}{\left(\dfrac{\gamma}{1+\gamma} ; q\right)_{s}}.
\end{align*}
With the help of Pascal's identity, it follows that
\begin{align*}
    S_{\infty}^{q, \alpha}(e_{m+1} ; x) &= \sum_{s=0}^{m+1} \left[ \binom{m}{s} + \binom{m}{s-1} \right] (-1)^s \frac{\left(\dfrac{x+\gamma}{1+\gamma} ; q\right)_{s}}{\left(\dfrac{\gamma}{1+\gamma} ; q\right)_{s}} \nonumber \\
    &= \sum_{s=0}^{m+1} \binom{m}{s} (-1)^s \frac{\left(\dfrac{x+\gamma}{1+\gamma} ; q\right)_{s}}{\left(\dfrac{\gamma}{1+\gamma} ; q\right)_{s}} + \sum_{s=0}^{m+1} \binom{m}{s-1} (-1)^s \frac{\left(\dfrac{x+\gamma}{1+\gamma} ; q\right)_{s}}{\left(\dfrac{\gamma}{1+\gamma} ; q\right)_{s}}
\end{align*}
Since $\binom{m}{m+1} = 0$ in the first sum and $\binom{m}{-1} = 0$ in the second sum, if we consider the first sum from $s=0$ to $s=m$ and shift the index of the second sum by starting from $s=1$ and replacing $s$ with $s+1$, it follows that
\begin{align*}
    S_{\infty}^{q, \alpha}(e_{m+1} ; x) = \sum_{s=0}^{m} \binom{m}{s} (-1)^s \frac{\left(\dfrac{x+\gamma}{1+\gamma} ; q\right)_{s}}{\left(\dfrac{\gamma}{1+\gamma} ; q\right)_{s}} + \sum_{s=0}^{m} \binom{m}{s} (-1)^{s+1} \frac{\left(\dfrac{x+\gamma}{1+\gamma} ; q\right)_{s+1}}{\left(\dfrac{\gamma}{1+\gamma} ; q\right)_{s+1}}
\end{align*}
Taking Theorem \ref{3.11} into account, this can be rearranged as
\begin{align}
    S_{\infty}^{q, \alpha}(e_{m+1} ; x) = S_{\infty}^{q, \alpha}(e_{m} ; x) - \frac{\left(1 - \dfrac{x+\gamma}{1+\gamma}\right)}{\left(1 - \dfrac{\gamma}{1+\gamma}\right)} \sum_{s=0}^{m} \binom{m}{s} (-1)^s \frac{\left(q \left(\dfrac{x+\gamma}{1+\gamma} \right) ; q\right)_{s}}{\left(q \left(\dfrac{\gamma}{1+\gamma} \right) ; q\right)_{s}} \label{68}
\end{align}
Finally, by replacing $\alpha$ with $q\alpha / (1+\alpha)$ and $x$ with $qx / (1+\alpha)$ in expression \eqref{allmoment}, 
\begin{align*}
    S_{\infty}^{q, \frac{q \alpha}{1+\alpha}}\left(e_{m} ; \frac{q x}{1+\alpha}\right) = \sum_{s=0}^{m} \binom{m}{s}(-1)^{s} \frac{\left(q\left(\dfrac{x+\gamma}{1+\gamma}\right) ; q\right)_{s}}{\left(q\left(\dfrac{\gamma}{1+\gamma}\right) ; q\right)_{s}}
\end{align*}
is obtained. By substituting this result into \eqref{68}, it yields the following recurrence relation
\begin{align*}
    S_{\infty}^{q, \alpha}(e_{m+1} ; x) = S_{\infty}^{q, \alpha}(e_{m} ; x) - (1-x) S_{\infty}^{q, \frac{q \alpha}{1+\alpha}}\left(e_{m} ; \frac{q x}{1+\alpha}\right).
\end{align*}
\end{proof}

\begin{corollary}
By taking the limit as $n \to \infty$ in the recurrence relation given by \eqref{rec1}, the limit $q$-Stancu operator satisfies the following equality:
\begin{align*}
    S_{\infty}^{q, \alpha}(e_{m+1} ; x) = x \sum_{s=0}^{m} \binom{m}{s} q^s (1-q)^{m-s} S_{\infty}^{q, \frac{q \alpha}{1+\alpha}}\left(e_{s} ; \dfrac{x+\alpha}{1+\alpha}\right)
\end{align*}
Furthermore, by setting $\alpha = 0$ in this expression, one arrives at the relation satisfied by the limit $q$-Bernstein operator in \cite{ostrovska2003}.
\end{corollary}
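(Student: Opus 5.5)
The plan is to pass to the limit $n\to\infty$ term by term in the finite sum \eqref{rec1}, with $m$ fixed.

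\emph{Step 1 (scalar factors).} Since $0<q<1$, we have $[n]_q=(1-q^n)/(1-q)\to 1/(1-q)$ and $[n-1]_q\to 1/(1-q)$. Hence
\[
\frac{[n-1]_q^{\,s}}{[n]_q^{\,m}}\longrightarrow (1-q)^{m-s}, \qquad 0\le s\le m,
\]
which already produces the coefficients $\binom{m}{s}q^s(1-q)^{m-s}$ in the claimed formula.

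\emph{Step 2 (left-hand side).} By the uniform convergence theorem just proved, applied with $\varphi=e_{m+1}\in C[0,1]$, we get $S_n^{q,\alpha}(e_{m+1};x)\to S_\infty^{q,\alpha}(e_{m+1};x)$.

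\emph{Step 3 (right-hand side).} Here the parameter is $\alpha'=q\alpha/(1+\alpha)\ge 0$, the point is $x'=(x+\alpha)/(1+\alpha)\in[0,1]$, and the degree is $n-1$. For fixed $\alpha'$ and $x'$, the same convergence theorem, used along the shifted index $n-1\to\infty$, gives
\[
S_{n-1}^{q,\alpha'}(e_s;x')\to S_\infty^{q,\alpha'}(e_s;x').
\]
Since the sum has only $m+1$ terms, the limit of the sum is the sum of the limits. Multiplying by $x$ and collecting the factors from Step 1 yields the stated identity. If uniformity in $x$ is wanted, it follows because $x\mapsto x'$ maps $[0,1]$ into $[0,1]$ and the convergence is uniform there.

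\emph{Step 4 ($\alpha=0$).} Setting $\alpha=0$ gives $\alpha'=0$ and $x'=x$, so the operators reduce to the limit $q$-Bernstein operator $B_\infty^q$. The formula becomes
\[
B_\infty^q(e_{m+1};x)=x\sum_{s=0}^{m}\binom{m}{s}q^s(1-q)^{m-s}B_\infty^q(e_s;x).
\]
This should be matched with the relation in \cite{ostrovska2003}, which is the $n\to\infty$ form of Ostrovska's $q$-Bernstein moment recurrence.

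\emph{Main obstacle.} The only real point to check is that the convergence theorem applies with the parameters $\alpha'$ and $x'$. We need $\alpha'\ge 0$, which holds. We also need the point $x'=1$, reached when $x=1$, to be covered; it is, because both operators interpolate at $1$. Everything else is a routine limit of finitely many terms.

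As an independent check, one can instead verify the identity directly from \eqref{allmoment}. With $\gamma'=\alpha'/(1-q)$, one computes $\gamma'/(1+\gamma')=q\gamma/(1+\gamma)$ and $(x'+\gamma')/(1+\gamma')=(x+\gamma)/(1+\gamma)$. Substituting these into \eqref{allmoment} and expanding should confirm the recurrence.
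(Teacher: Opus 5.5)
Your proposal is correct and takes the same route as the paper, whose entire justification is to let $n\to\infty$ in \eqref{rec1}. You supply the details the paper leaves implicit: $[n-1]_q^{\,s}/[n]_q^{\,m}\to(1-q)^{m-s}$, and the uniform convergence theorem applied both to $S_n^{q,\alpha}$ and to $S_{n-1}^{q,\frac{q\alpha}{1+\alpha}}$ at the point $\frac{x+\alpha}{1+\alpha}\in[0,1]$, with parameter $\frac{q\alpha}{1+\alpha}\ge 0$ (your optional check via \eqref{allmoment} also goes through, since with $b=\frac{x+\gamma}{1+\gamma}$, $c=\frac{\gamma}{1+\gamma}$ one has $\frac{(b;q)_j}{(c;q)_j}-\frac{(b;q)_{j+1}}{(c;q)_{j+1}}=x\,q^j\frac{(b;q)_j}{(qc;q)_j}$).
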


\section*{Conflict of Interest:}

The authors state that there is no conflict of interest.

\section*{Funding Information:}

The authors state that no funding involved.

\section*{Financial interests:} 

The authors declare they have no financial interests.


\begin{thebibliography}{99}

\bibitem{agratini2010}
O. Agratini,
{\it On a $q$-analogue of Stancu operators},
Cent. Eur. J. Math. {\bf 8} (2010), 191--198.

\bibitem{askey}
G. E. Andrews, R. Askey, R. Roy,
{\it Special functions},
Encyclopedia of Mathematics and Its Applications, Cambridge Univ. Press, Cambridge, 1999.

\bibitem{erencin2012}
A. Erençin, G. Başcanbaz-Tunca, F. Taşdelen,
{\it Kantorovich type $q$-Bernstein-Stancu operators},
Studia Univ. Babeş-Bolyai Math. {\bf 57} (2012), no. 1, 89--105.

\bibitem{finta2016a}
Z. Finta,
{\it Approximation by Stancu type $q$-operators},
Ann. Univ. Ferrara {\bf 62} (2016), 217--230.

\bibitem{gupta2016}
V. Gupta, T. M. Rassias, H. Sharma,
{\it $q$-Durrmeyer operators based on Pólya distribution},
J. Nonlinear Sci. Appl. {\bf 9} (2016), no. 4, 1497--1504.

\bibitem{Ilinskii}
A. Il'inskii, S. Ostrovska,
{\it Convergence of generalized Bernstein polynomials},
J. Approx. Theory {\bf 116} (2002), no. 1, 100--112.

\bibitem{jiang}
Y. Jiang, J. Li,
{\it The rate of convergence of $q$-Bernstein-Stancu polynomials},
Int. J. Wavelets Multiresolut. Inf. Process. {\bf 7} (2009), no. 6, 773--779.

\bibitem{kac}
V. Kac, P. Cheung,
{\it Quantum calculus},
Springer, New York, 2001.

\bibitem{lupas}
A. Lupaş,
{\it A $q$-analogue of the Bernstein operator},
Seminar on Numerical and Statistical Calculus, Univ. Cluj-Napoca, No. 9 (1987).

\bibitem{mahmudov}
N. I. Mahmudov,
{\it Higher order limit $q$-Bernstein operators},
Math. Methods Appl. Sci. {\bf 34} (2011), no. 13, 1618--1626.

\bibitem{neer2017}
T. Neer, A. M. Acu, P. Agrawal,
{\it Approximation of functions by bivariate $q$-Stancu-Durrmeyer type operators},
Math. Commun. {\bf 23} (2018), no. 2, 161--180.

\bibitem{nowak}
G. Nowak,
{\it Approximation properties for generalized $q$-Bernstein polynomials},
J. Math. Anal. Appl. {\bf 350} (2009), no. 1, 50--55.

\bibitem{nowak_gupta2011}
G. Nowak, V. Gupta,
{\it The rate of pointwise approximation of positive linear operators based on $q$-integers},
Ukr. Math. J. {\bf 63} (2011), 403--415.

\bibitem{oruc}
H. Oruç, G. M. Phillips,
{\it A generalization of Bernstein polynomials},
Proc. Edinburgh Math. Soc. (2) {\bf 42} (1999), no. 2, 403--413.

\bibitem{ostrovska2003}
S. Ostrovska,
{\it $q$-Bernstein polynomials and their iterates},
J. Approx. Theory {\bf 123} (2003), no. 2, 232--255.

\bibitem{ostrovska2006}
S. Ostrovska,
{\it On the Lupaş $q$-analogue of the Bernstein operator},
Rocky Mountain J. Math. {\bf 36} (2006), no. 5, 1615--1629.

\bibitem{ostrovska2014}
S. Ostrovska, M. Turan,
{\it On the eigenvectors of the $q$-Bernstein operators},
Math. Methods Appl. Sci. {\bf 37} (2014), no. 4, 562--570.

\bibitem{ostrovska2023}
S. Ostrovska, M. Turan,
{\it On the block functions generating the limit $q$-Lupa\c{s} operator},
Quaest. Math. {\bf 46} (2023), no. 4, 711--719.

\bibitem{phillips}
G. M. Phillips,
{\it Bernstein polynomials based on the $q$-integers},
Ann. Numer. Math. {\bf 4} (1997), 511--518.

\bibitem{turan}
M. Turan,
{\it The truncated-Bernstein polynomials in the case $q>1$},
Abstr. Appl. Anal. 2014, Art. ID 126319.

\bibitem{videnskii}
V. S. Videnskii,
{\it On some classes of $q$-parametric positive linear operators},
Oper. Theory Adv. Appl. {\bf 158} (2005), 213--222.

\bibitem{wang2013}
Y. Wang, Y. Zhou,
{\it Iterates properties for $q$-Bernstein-Stancu operators},
Int. J. Model. Optim. {\bf 3} (2013), no. 4, 362--368.

\bibitem{wang2014}
Y. Wang, Y. Zhou,
{\it Shape preserving properties for $q$-Bernstein-Stancu operators},
J. Math. {\bf 2014} (2014), Art. ID 603694.

\end{thebibliography}
\end{document}